\DeclareSymbolFont{bbold}{U}{bbold}{m}{n}
\DeclareSymbolFontAlphabet{\mathbbold}{bbold}
\newtheorem{dummy}{dummy}[section]              
\newtheorem{lemma}[dummy]{Lemma}
\newtheorem{theorem}[dummy]{Theorem}
\newtheorem{corollary}[dummy]{Corollary}
\newtheorem{proposition}[dummy]{Proposition}
\newtheorem{conjecture}[dummy]{Conjecture}
\theoremstyle{definition}                                  
\newtheorem{definition}[dummy]{Definition}
\newtheorem{example}[dummy]{Example}
\newtheorem{remark}[dummy]{Remark}
\newcommand{\Bord}{\mathbf{Bord}}
\newcommand{\Alg}{\mathbf{Alg}}
\newcommand{\Vect}{\mathbf{Vect}}
\DeclareMathOperator{\Aut }{Aut}
\DeclareMathOperator{\Hom}{Hom}
\DeclareMathOperator{\colim}{colim}
\newcommand{\Mod}{\mathbf{Mod}}
\newcommand{\module}{-\mathrm{mod}}
\newcommand{\QC}{QC}
\DeclareMathOperator{\Map}{Map}
\DeclareMathOperator{\Rep}{Rep}
\newcommand{\Cat}{\mathbf{Cat}}
\newcommand{\Tot}{\mathbf{Tot}}
\newcommand{\bs}{\backslash}
\newcommand{\HC}{HC}
\newcommand{\Loc}{\mathcal Loc}
\newcommand{\fg}{\mathfrak{g}}
\newcommand{\fgx}{\mathfrak{g}^\star}
\newcommand{\cA}{\mathcal A}
\newcommand{\cB}{\mathcal B}
\newcommand{\cC}{\mathcal C}
\newcommand{\cD}{\mathcal D}
\newcommand{\cE}{\mathcal E}
\newcommand{\cF}{\mathcal F}
\newcommand{\cL}{\mathcal L}
\newcommand{\cM}{\mathcal M}
\newcommand{\cN}{\mathcal N}
\newcommand{\cO}{\mathcal O}
\newcommand{\cP}{\mathcal P}
\newcommand{\cW}{\mathcal W}
\newcommand{\cX}{\mathcal X}
\newcommand{\cZ}{\mathcal Z}
\newcommand{\A}{\mathbb A}
\newcommand{\C}{\mathbb C}
\newcommand{\D}{\mathbb D}
\newcommand{\G}{\mathbb G}
\newcommand{\Z}{\mathbb Z}
\newcommand{\GGG}{G\backslash G_{dR}/G}
\newcommand{\ot}{\otimes}
\newcommand{\oo}{\infty}
\newcommand{\CC}{\mathbb C}
\newcommand{\Tr}{\mathcal Tr}
\newcommand*\leftdash{\rotatebox[origin=c]{-45}{$\dabar@\dabar@\dabar@$}}
\newcommand*\rightdash{\rotatebox[origin=c]{45}{$\dabar@\dabar@\dabar@$}}
\newcommand{\quot}[3]{{#1}\backslash{#2}/{#3}} 
\newcommand{\Gm}{{\G_m}}
\newcommand{\adjquot}{{/_{\hspace{-0.2em}ad}\hspace{0.1em}}}
\title{The Character Field Theory and Homology of Character Varieties}
\author{David Ben-Zvi} \address{Department of Mathematics\\University
  of Texas\\Austin, TX 78712-0257} \email{benzvi@math.utexas.edu}
\author{Sam Gunningham}\address{Department of Mathematics\\University
  of Texas\\Austin, TX 78712-0257} \email{gunningham@math.utexas.edu}
\author{David Nadler} \address{Department of Mathematics\\University
  of California\\Berkeley, CA 94720-3840}
\email{nadler@math.berkeley.edu}
\begin{document}

\begin{abstract}
We construct an extended oriented $(2+\epsilon)$-dimensional topological field theory, the character field theory $\cX_G$ attached to a affine algebraic group in characteristic zero, which calculates the homology of character varieties of surfaces. It is a model for a dimensional reduction of Kapustin-Witten theory ($\cN=4$ $d=4$ super-Yang-Mills in the GL twist), and a universal version of the unipotent character field theory introduced in arXiv:0904.1247.
Boundary conditions in $\cX_G$ are given by quantum Hamiltonian $G$-spaces, as captured by de Rham (or strong) $G$-categories, i.e., module categories for the monoidal dg category $\cD(G)$ of $\cD$-modules on $G$.
We show that the circle integral $\cX_G(S^1)$ (the center and trace of $\cD(G)$) is identified with the category $\cD(G/G)$ of ``class $\cD$-modules", while for an oriented surface $S$ (with arbitrary decorations at punctures) we show that $\cX_G(S)\simeq{\rm H}_*^{BM}(Loc_G(S))$ is the Borel-Moore homology of the corresponding character stack. We also describe the ``Hodge filtration" on the character theory, a one parameter degeneration to a TFT whose boundary conditions are given by classical Hamiltonian $G$-spaces, and which encodes a variant of the Hodge filtration on character varieties. \end{abstract}
\maketitle

\section{Introduction}
Let us fix a field $k$ of characteristic zero and an affine algebraic group $G$ over $k$ -- for example a complex reductive group.

\subsection{Summary of results}
Given a topological surface $S$, the {\em character stack} (or Betti space) $\Loc_G(S)$ is the (derived) stack of $G$-local systems on $S$, i.e. of representations of the fundamental ($\infty$-)groupoid of $S$ into $G$. In particular for a pointed connected oriented surface of positive genus $g$, $\Loc_G(S)$ is the quotient by $G$ of the (derived) fiber of the map $G^{2g}\to G$ given by $$(A_1,B_1,...A_g,B_g)\mapsto \Pi_i A_iB_iA_i^{-1} B_i^{-1}.$$

We denote by $\Cat$ the $(\infty,1)$-category of $k$-linear stable presentable $\infty$-categories (or equivalently differential graded categories) where morphisms are colimit-preserving functors. We denote by $\Alg_{(1)}\Cat$ the Morita $(\infty,2)$-category of algebra objects in $\Cat$ (i.e., monoidal dg categories). (See Section~\ref{prelim} for the relevant definitions.)

\begin{theorem}\label{main}
There is a 2d oriented TFT, i.e., symmetric monoidal functor
\[
\cX_G: \Bord_2^{or} \to \Alg_{(1)}(\Cat),
\]
which assigns the following invariants:
\begin{itemize}
\item To a point, $\cX_G$ assigns the ``categorical group algebra'' $\cD(G)$ of $\cD$-modules on $G$ under convolution.
\item To a circle, $\cX_G$ assigns the category $\cD(G)^G$ of $G$-equivariant $\cD$-modules (``class sheaves") on $G$.
\item To a closed oriented surface $S$, $\cX_G$ assigns the Borel-Moore chains $C_\ast^{BM}(\Loc_G(S))$.
\item More generally to any oriented surface with boundary $S,\partial S= \partial_{in} S\coprod \partial_{out} S$ , $\cX_G$ assigns a functor $\cD(\Loc_G(\partial_{in} S))\to \cD(\Loc_G(\partial_{out} S)$ which is identified with the integral transform $q_*p^!$ along the correspondence 
$$\xymatrix{&\ar[dl]_-p \Loc_G(S) \ar[dr]^-q& \\
\Loc_G(\partial_{in} S)&&\Loc_G(\partial_{out}(S))}$$
\end{itemize}
\end{theorem}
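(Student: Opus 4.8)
The plan is to construct $\cX_G$ via the oriented cobordism hypothesis, using $\cD(G)$ as the value on the point, and then to read off the remaining three values by identifying the universal bordism operations with integral transforms along the mapping stacks $\Loc_G(-)$. Concretely, since $\Bord_2^{or}$ is the free symmetric monoidal $(\infty,2)$-category generated by a single oriented fully dualizable object, a symmetric monoidal functor
\[
\cX_G\colon \Bord_2^{or}\longrightarrow \Alg_{(1)}(\Cat)
\]
is the same datum as a fully dualizable object of $\Alg_{(1)}(\Cat)$ equipped with a homotopy $SO(2)$-fixed-point structure, i.e. a trivialization of its Serre automorphism. So the first task is to exhibit such structure on $\cD(G)$, and the second is to compute the resulting invariants.

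First I would verify dualizability and orientability of $\cD(G)$. In any Morita $(\infty,2)$-category an algebra object is automatically dualizable as a $1$-morphism, with dual the opposite algebra; for $\cD(G)$ under convolution the inversion $g\mapsto g^{-1}$ identifies $\cD(G)^{\mathrm{op}}\simeq\cD(G)$, so $\cD(G)$ is canonically self-dual. Full ($2$-)dualizability then amounts to smoothness and properness of $\cD(G)$: smoothness follows from smoothness of the variety $G$, which makes the regular bimodule perfect. For the $SO(2)$-fixed-point structure I would use the group structure of $G$ decisively: the trivialization of the dualizing sheaf of $G$ (unimodularity/parallelizability) endows $\cD(G)$ with a Calabi--Yau structure, and this is precisely what trivializes the Serre automorphism and supplies the orientation data required by the cobordism hypothesis.

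Next I would identify the three distinguished values, the key geometric input being that $\Loc_G(M)=\Map(M,BG)$ sends homotopy pushouts of manifolds to fiber products of stacks (derived Van Kampen): gluing of bordisms becomes composition of correspondences. The value on a bordism $S$ is then, by naturality of the universal operations, the integral transform $q_\ast p^!$ along $\Loc_G(\partial_{in}S)\leftarrow\Loc_G(S)\rightarrow\Loc_G(\partial_{out}S)$, with composition matched to iterated transform via base change for $\cD$-modules; this is the last bullet. The circle is the categorical dimension, i.e. the trace of $\id_{\cD(G)}$, equivalently the Hochschild homology of $\cD(G)$; by descent this is $\cD$ of the free loop stack $\mathcal L BG\simeq G/G$, so $\cX_G(S^1)\simeq\cD(G/G)=\cD(G)^G$, the class $\cD$-modules. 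Finally a closed oriented surface $S$ is the bordism $\emptyset\to\emptyset$, sent to $\pi_\ast\pi^!$ of the unit along $\pi\colon\Loc_G(S)\to\mathrm{pt}$; since $\pi^!$ of the constant sheaf is the dualizing complex $\omega_{\Loc_G(S)}$, its global sections are by definition $C_\ast^{BM}(\Loc_G(S))$.

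The hard part will be the analytic/foundational one hiding behind the phrase ``fully dualizable.'' Properness of $\cD(G)$ fails in the naive sense — the monoidal unit is not compact — so the evaluation and coevaluation do not admit adjoints on the nose; this is exactly the source of the ``$(2+\epsilon)$'' in the title. The resolution is to run the sheaf theory in a renormalized (ind-coherent) framework in which the traces and integral transforms are well defined, and to establish $q_\ast$, $p^!$ and the full tower of base-change coherences over the class of stacks that actually occurs, namely the non-quasi-compact, singular, genuinely derived character stacks $\Loc_G(S)$. Carrying the Calabi--Yau/orientation data through this renormalization, so that the closed-surface values are computed as honest Borel--Moore chains, is where the essential work lies.
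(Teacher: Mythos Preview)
Your overall architecture matches the paper's: construct $\cX_G$ by the cobordism hypothesis from a Calabi--Yau structure on $\cD(G)$, then identify the values on circles and surfaces with $\cD(\Loc_G(-))$ and integral transforms $q_*p^!$ along correspondences of character stacks. The geometric identification you sketch (Van Kampen for $\Loc_G$, base change, closed surface $\mapsto$ Borel--Moore chains) is essentially the paper's functor $\cX_G^{+,geom}$, and the paper likewise reduces the comparison to the disc and pair of pants.

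However, there is a genuine gap in your 2-dualizability argument, and your proposed repair is not the one that works. You correctly observe that $\cD(G)$ is not proper (the monoidal unit is not compact), but your suggestion to ``run the sheaf theory in a renormalized (ind-coherent) framework'' does not address this: the paper already works in the ind-coherent/$\cD$-module formalism of Gaitsgory--Rozenblyum, and $\cD(G)$ is still not rigid there. The actual mechanism is different and is the paper's central technical input: $\cD(G)$ is \emph{Morita equivalent} to the category $\HC_G$ of Harish-Chandra bimodules, and $\HC_G$ \emph{is} rigid, hence 2-dualizable. This Morita equivalence is Beraldo's theorem, resting on Gaitsgory's 1-affineness of $BG$; without it one does not know $\cD(G)$ is 2-dualizable at all (the paper remarks this was ``not expected by experts''). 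Relatedly, your reading of ``$(2+\epsilon)$'' as signaling that the theory is only partially defined on closed surfaces is a misinterpretation: the theory is a genuine 2d oriented TFT, and closed-surface values are honest Borel--Moore chains.

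Your Calabi--Yau argument is also too compressed to stand as a proof. Unimodularity of $G$ (triviality of $\omega_G$) is indeed lurking in the background---it underlies the equivalence $\cC_G\simeq\cC^G$ used to identify $\Tr(\cD(G))\simeq\cD(G\adjquot G)$---but it does not by itself produce an $S^1$-equivariant non-degenerate trace. The paper builds the trace map explicitly as $p_*e^!$ along $G\adjquot G \leftarrow pt/G \to pt$, checks non-degeneracy by base change, and then establishes $S^1$-equivariance by exhibiting the whole construction as the fiber of a family over the moduli of framed circles $BS^1$. That equivariance step is real content you have not supplied.
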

 In this context, the Borel-Moore chains $C_\ast^{BM}(\Loc_G(\Sigma))$ means the $G$-equivariant Borel-Moore chains of $\Hom(\pi_1(\Sigma, x), G)$. The categorical group algebra $\cD(G)$ is Morita equivalent to the monoidal category $\HC_G$ of Harish-Chandra bimodules \cite{dario}, as well as (for $G$ reductive) to the universal Hecke categories $\cD(\quot NGN)$ and $\cD_{(H)}(\quot NGN)_{(H)}$ \cite{morita}; any of these monoidal categories may be considered as the value of $\cX_G$ on a point\footnote{We will use the notations $(-)^{(G)}, (-)_{(G)}$ for a group $G$ to denote weak $G$-(co)invariants, reserving $(-)^G, (-)_{G}$ for strong (co)invariants.}.
 
 \subsubsection{Twisted Character Stacks} Given a finite subset $\{x_i\}$ of $S$ labeled by conjugacy classes $\{\gamma_i\}$ in $G$ ($i=1, \ldots k$), we likewise consider the stack $\Loc_G(S,x_i, \gamma_i)$ of local systems on $S\setminus \{x_i\}$ with monodromy around $x_i$ contained in $\gamma_i$. The TFT $\cX_G$ encodes the Borel-Moore chains of $\Loc_G(S,x_i,\gamma_i)$ as follows. The punctured surface, considered as a cobordism from $k$ circles to the empty $1$-manifold, is assigned a functor from $\left(\cD(G)^G\right)^{\otimes k}$ to $\Vect$. Thus $\cX_G$ assigns a chain complex to a punctured surface $(S,x_i)$ where each puncture is decorated with with an object $\cF_i \in \cD(G)^G$. Taking the $\cF_i = \delta_{\gamma_i}$, the $D$-module of $\delta$-functions on $\gamma_i$, we obtain the homology of the twisted character stack.

\begin{example}
In the case $G=SL_n$, consider the case where the surface has a single puncture, decorated with a primitive $nth$-root of the identity matrix. This stack is represented by a $\Z/n\Z$-gerbe over a smooth affine variety, the twisted character variety appearing in the work of Hausel, Letellier and Rodriguez-Villegas \cite{HRV, HLRV}. The finite gerbe does not affect the cohomology, thus the values of the TFT $\cX_{SL_n}$ encode the cohomology groups of the twisted character varieties studied in loc. cit. Similarly, the values of the twisted $GL_n$ character variety are encoded by $\cX_{GL_n}$, where they appear with an extra factor consisting of the $Z(GL_n)$-equivariant cohomology of a point.
\end{example}

\subsubsection{Classical Limit and Hodge Filtration}
Recall that the classical Rees construction identifies filtered modules with flat $\Gm$-equivariant quasicoherent sheaves over $\A^1$, recovering our module as the fiber at $1$ and its associated graded as the fiber at $0$. For example the filtrations on enveloping algebras $U\fg$ and differential operators $\cD_X$ are embodies by the corresponding Rees constructions, which are $\Gm$-equivariant algebras $U_{\hbar}\fg$ and $\cD_{\hbar,X}$ over $\A^1$ with special fiber the associated graded algebras $Sym\fg$ and $\cO(T^*X)$.
In the nonlinear setting following~\cite{simpson} (see~\cite[IV.5.1]{GR} in the derived setting) one may define a filtration on an object to be a $\Gm$-equivariant family over $\A^1$ recovering our object as the fiber at $1$.

Categories of $\cD$-modules have a natural filtration and resulting degeneration to their classical analog, in which $\cD$-modules are replaced by quasicoherent sheaves on cotangent stacks. In particular the monoidal category $\cD(G)$ has a classical analog, the category $\QC(T^*G)$ with monoidal structure given by convolution: we consider $T^*G$, with its two (left and right) projections to $\fgx$, as a groupoid over $\fgx$ (the action groupoid for the coadjoint action of $G$ ), so that quasi coherent sheaves $\QC(T^*G)$ acquires a convolution monoidal structure.  Just as $\cD(G)$ is Morita equivalent to $\HC_G$, Gaitsgory's 1-affineness theorem~\cite{1affine} implies that $\QC(T^*G)$ is Morita equivalent to the rigid symmetric monoidal category $$\HC_{0,G}=\QC(\fgx/G)\simeq \QC(G\backslash T^*G/G),$$ which defines an oriented 2d topological field theory $\cX_{0,G}$ (as considered e.g. in~\cite{BFN}). 
The monoidal categories $\QC(T^*G)$ and $\HC_{0,G}$ are naturally graded, i.e., have a structure of algebras over $\QC(\G_m)$ (or of sheaves of monoidal categories over $pt/\G_m$), and the TFT $\cX_{0,G}$ likewise inherits a natural grading.
 
We can interpolate between $\cD(G)$ and $\QC(T^*G)$ (or between $\HC_G$ and $\HC_{0,G}$) using the Rees construction for the standard filtration on $\cD(G)$ (equivalently by considering sheaves on Simpson's Hodge filtration $G_{Hod}$ on $G_{dR}$~\cite{simpson}). The result is a filtered monoidal category $\cD_{\hbar}(G)\in\Cat_{filt}$ -- i.e., a $\G_m$ equivariant family of monoidal categories $\cD_\hbar(G)$ for $\hbar\in \A^1$ (a family of monoidal categories over $\A^1/\G_m$, or by~\cite{1affine} an algebra over $\QC(\A^1/\G_m)$).
The $\oo$-categorical theory of the filtration on $\cD$-modules and the Rees construction is developed in~\cite[IV.4-5]{GR}.

Theorem \ref{main} can be generalized as follows:

\begin{theorem}\label{filtered main}
There is a filtered 2d oriented TFT 
\[
\cX_{\hbar,G}: \Bord_2^{or} \to \Alg_{(1)}(\Cat_{filt}),
\]
with $\cX_{1,G}\simeq \cX_G$ and the associated graded theory $\cX_{0,G}$ has the following values:
\begin{itemize}
\item To a point, $\cX_{0,G}$ assigns the monoidal category $\QC(\fgx/G)$.
\item To a circle, $\cX_{0,G}$ assigns $\QC(T^*(G/G))$.
\item To a closed oriented surface $S$, $\cX_{0,G}$ assigns the Dolbeault cohomology of the character stack,
$$\Gamma(T*[-1]\Loc_G(S),\cO)=\Gamma(\Loc_G(S),\Omega^\cdot).$$ More generally the value on an oriented surface with boundary is given by integral transforms along $\Loc_G(S)$.
\end{itemize}
Moreover the resulting filtration on $\cX_{1,G}(S)$ coincides with the Hodge filtration on cohomology of the dualizing complex of $\Loc_G(S)$.  
\end{theorem}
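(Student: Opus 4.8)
The plan is to run the proof of Theorem~\ref{main} verbatim, but internally to the symmetric monoidal $(\infty,2)$-category $\Alg_{(1)}(\Cat_{filt})$ of algebra objects in filtered categories, with the Rees family $\cD_\hbar(G)$ playing the role of $\cD(G)$. Here $\Cat_{filt}$ is the category of $\QC(\A^1/\Gm)$-modules; the fiber functor at $\hbar=1$ and the associated-graded functor (restriction to $\{0\}/\Gm$) are both symmetric monoidal, with targets $\Cat$ and its graded analogue $\Cat^{gr}$; and $\cD_\hbar(G)$ is the algebra object with $\cD_1(G)=\cD(G)$ and associated graded $\QC(T^*G)$ under convolution. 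The first step is to upgrade the Calabi--Yau (equivalently, $SO(2)$-fixed fully dualizable) structure on $\cD(G)$ that produces the TFT of Theorem~\ref{main} to the filtered object $\cD_\hbar(G)$. Because the order filtration on differential operators is multiplicative and the unit, counit, and trace data on $\cD(G)$ are assembled from the (filtration-preserving) multiplication, inversion, and integration maps of the group $G$, this lift is essentially formal; granting it, the same structural input that builds $\cX_G$ in Theorem~\ref{main} now produces $\cX_{\hbar,G}\colon\Bord_2^{or}\to\Alg_{(1)}(\Cat_{filt})$ directly.

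Applying the two symmetric monoidal specialization functors then yields both halves of the first assertion. The fiber at $\hbar=1$ sends $\cX_{\hbar,G}$ to $\cX_G$, so $\cX_{1,G}\simeq\cX_G$. The associated graded sends it to the graded theory $\cX_{0,G}$ built from $\QC(T^*G)$, whose values are computed exactly as in Theorem~\ref{main} with $\cD$ replaced by $\QC$ throughout: on a point one obtains $\QC(T^*G)\simeq\QC(\fgx/G)$ by the $1$-affineness argument recalled above; on the circle one computes the trace of the convolution category $\QC(T^*G)$, which is $\QC$ of the loop space of $\fgx/G$ and hence $\QC(T^*(G/G))$; and on a surface $S$ the integral-transform description computes global sections of the structure sheaf on the shifted cotangent $T^*[-1]\Loc_G(S)$, i.e.\ the Dolbeault cohomology $\Gamma(\Loc_G(S),\Omega^\cdot)$. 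The point to check is that forming the trace and taking global sections commute with passage to the associated graded, which follows from flatness of the Rees family $\cD_\hbar(G)$ over $\A^1$ together with the identification $\QC(T^*G)=\mathrm{gr}\,\cD(G)$.

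The final clause---that the induced filtration on $\cX_{1,G}(S)=C_\ast^{BM}(\Loc_G(S))$ is the Hodge filtration on the cohomology of the dualizing complex---is the substantive geometric point, and I expect it to be the main obstacle. By construction the filtration carried by $\cX_{\hbar,G}$ is that of $\cD_\hbar(G)$, which via the Rees construction is Simpson's Hodge deformation $\QC(G_{Hod})$ of $G_{dR}$. What must be shown is that propagating this filtration through the gluing maps of the TFT---the correspondences $q_\ast p^!$ attached to a pair-of-pants decomposition of $S$---reproduces Simpson's Hodge filtration on the nonabelian cohomology of $S$. I would establish this by checking that $G_{Hod}$ is functorial for the assembly of $\Loc_G(S)$ from its elementary pieces, so that the filtered integral transforms compute global sections over the Hodge-filtered character stack $\Loc_G(S)_{Hod}$; the identification of the special fiber with the Dolbeault cohomology from the $\cX_{0,G}$ computation then pins down the associated graded, and flatness of the Rees family over $\A^1$ upgrades this to an identification of filtrations. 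The difficulty is exactly that the TFT filtration is produced locally and abstractly through factorization and traces, whereas the Hodge filtration is a global geometric structure, so the crux is verifying that the local-to-global gluing of the former computes the latter.
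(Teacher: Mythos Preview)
Your proposal is correct and follows essentially the same approach as the paper, which explicitly states that the filtered proof ``closely imitates'' the unfiltered one and scatters the necessary modifications (the filtered Morita equivalence $\cD_\hbar(G)\simeq\HC_{\hbar,G}$, the filtered Calabi--Yau structure, and the filtered correspondence functor $\cD_\hbar$ on stacks) throughout the text. One small difference worth noting: for the $\hbar=0$ values the paper computes $\cX_{0,G}$ directly as commutative factorization homology of the \emph{symmetric} monoidal category $\HC_{0,G}=\QC(\fgx/G)$, identifying $\cX_{0,G}(S)$ with $\QC$ of the mapping stack $[S,\fgx/G]\simeq T^*[1-n]\Loc_G(S)$, rather than by taking associated graded of the filtered trace as you propose; both routes give the same answer, but the paper's shortcut avoids the flatness/commutation check you flag.
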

The filtration on the Borel-Moore homology of $\Loc_G(S)$ that arises is the Hodge filtration studied in~\cite[IV.5.5]{GR}, the derived version of the so-called ``naive'' Hodge filtration, i.e. the filtration arising from the de Rham to Dolbeault degeneration of the (non-proper, and not necessarily smooth) character stack (a form of the Hodge filtration of cyclic homology~\cite{Weibel}). As remarked upon in \cite{Weibel,SiT}, the naive Hodge filtration filtration on a (non-smooth or non-proper) variety does not always agree with the Deligne Hodge filtration from the theory of mixed Hodge structures. Nonetheless, we expect that for character stacks these filtrations do, indeed, agree:
\begin{conjecture}\label{naive}
	The naive Hodge filtration on the Borel-Moore homology of character stacks for complex reductive $G$ agrees with the Hodge filtration in the sense of mixed Hodge structure. 
\end{conjecture}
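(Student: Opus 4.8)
The plan is to promote the desired coincidence to an equality of two \emph{filtered} two-dimensional field theories, so that the gluing structure of $\cX_{\hbar,G}$ from Theorem~\ref{filtered main} reduces the comparison to a finite set of generators of $\Bord_2^{or}$. On one side sits the naive filtration, which is by construction the associated-graded data of $\cX_{\hbar,G}$ realizing the de Rham-to-Dolbeault degeneration of $\Loc_G(S)$. On the other side, Saito's theory of (equivariant) mixed Hodge modules produces a second filtration: the dualizing complex $\omega_{\Loc_G(S)}$, whose cohomology is ${\rm H}^{BM}_\ast(\Loc_G(S))$, underlies a complex of mixed Hodge modules, and the correspondence maps $p,q$ of Theorem~\ref{main} are algebraic, so the integral transforms $q_\ast p^!$ lift through the six-functor formalism to functors preserving the Hodge filtration. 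These Deligne filtrations thus assemble, along the same cobordism combinatorics, into a filtered theory $\cX^{\mathrm{Del}}_{\hbar,G}$; the conjecture becomes the statement that the tautological equivalence of underlying theories $\cX_{1,G}\simeq\cX^{\mathrm{Del}}_{1,G}\simeq\cX_G$ is one of \emph{filtered} theories.

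First I would settle the base case on generators. The generating data of an oriented $2$d theory are the values on the point and circle together with the cup, cap, and pair-of-pants structure maps, and these involve only the explicit spaces $G$, the adjoint quotient $G/G$, and $\fgx/G$. For $G$ reductive the (equivariant) cohomology of these is classically of Hodge--Tate type -- $H^\ast(G)$ is an exterior algebra on odd primitive Tate classes, and $H^\ast_G(\mathrm{pt})$ is a polynomial algebra on Chern classes -- so that on each generator both the naive and the Deligne Hodge filtrations reduce to the unique Tate filtration indexed by cohomological degree. A direct computation identifies the two there, and since both filtrations are functorial under the structure cobordisms ($\cX_{\hbar,G}$ by construction, $\cX^{\mathrm{Del}}_{\hbar,G}$ by functoriality of mixed Hodge modules), the identification propagates to $\cX_G(S)$ for every $S$.

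The hard part will be to make this propagation genuinely filtration-level rather than merely numerical. In general the naive and Deligne filtrations disagree precisely because of non-properness and singularities, so the real content is a purity input specific to character varieties, which I would supply in two stages. Stage one: the flatness of the Rees family $\cX_{\hbar,G}$ over $\A^1/\Gm$ forces degeneration of the associated de Rham-to-Dolbeault spectral sequence for $\Loc_G(S)$, pinning down the associated-graded ranks of the naive filtration; stage two: the $E$-polynomial computations of Hausel--Rodriguez-Villegas~\cite{HRV} and their extensions show that ${\rm H}^{BM}_\ast(\Loc_G(S))$ is of Hodge--Tate type, so that the Deligne Hodge filtration is determined by the weight filtration and has matching associated-graded ranks. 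The principal obstacle is to upgrade this coincidence of associated gradeds to an equality of the filtrations themselves, ruling out a discrepancy invisible to $E$-polynomials. I expect the crux to be the compatibility at the level of coefficients: one must show that the categorical Rees/Hodge filtration on $\cD(G)$ defining $\cX_{\hbar,G}$, coming from Simpson's $\lambda$-connection family $G_{Hod}$~\cite{simpson}, lifts to a filtration by mixed Hodge modules on the character stack agreeing with Saito's -- a statement about the degenerating family $G_{Hod}$ and not merely its fibers.
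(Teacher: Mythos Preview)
The statement you are attempting to prove is labeled as a \emph{conjecture} in the paper, not a theorem. The paper offers no proof; it presents the assertion explicitly as an expectation (``we expect that for character stacks these filtrations do, indeed, agree'') and leaves it open. There is therefore no argument in the paper against which to compare your proposal.

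Regarding the proposal on its own terms: it is a strategy outline rather than a proof, and you correctly locate the gap yourself. The step you call ``the crux''---lifting the Simpson/Rees filtration on $\cD(G)$ to one by mixed Hodge modules that agrees with Saito's---is essentially the conjecture restated at the level of coefficients, and nothing in the sketch discharges it. Two further points deserve caution. First, the existence of the filtered theory $\cX^{\mathrm{Del}}_{\hbar,G}$ is not free: Saito's Hodge filtration is defined via resolution-of-singularities data and is not a priori multiplicative under the correspondence functors $q_\ast p^!$ in the way the $X_{Hod}$ degeneration is, so assembling it into a TFT already presupposes the compatibility you are trying to establish. Second, even granting both filtered theories, matching them on the generating cobordisms does not automatically propagate: equality of filtrations on a composite requires that the structure maps be \emph{strict} for both filtrations, which again is the unproved compatibility. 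The appeal to $E$-polynomials from~\cite{HRV} constrains only associated gradeds and cannot by itself rule out a nontrivial regrading, as you note.
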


For a mixed Hodge structure of Tate type, the Hodge filtration and the weight filtration agree up to reindexing. Thus, Conjecture \ref{naive} implies that the mixed Hodge polynomials of character stacks of complex reductive groups are encoded in the TFT $\cX_{\hbar,G}$.

\begin{remark}[Filtered proofs]
The proof of Theorem~\ref{filtered main} closely imitates the proof of Theorem~\ref{main} but is notationally more cumbersome and the background less documented in the literature.
We therefore confine ourself throughout the text to proofs in the unfiltered setting and accompanying remarks explaining the necessary modifications for the filtered version.
\end{remark}

\subsection{Motivation: Character varieties}\label{ss character varieties}
The cohomology of character varieties of complex reductive groups is a subject of great interest and extremely rich structure, see in particular~\cite{HRV,HLRV,dCHM}. In~\cite{HRV} the cohomology of character varieties is studied using the counts of points of character varieties of the finite Lie groups $G(F)$ for $F$ a finite field. Such counts are well known to fit in the framework of topological field theory. Namely for any finite group $\Gamma$ there is an extended 2d topological field theory $Z_\Gamma$ (topological $\Gamma$-Yang-Mills theory, see for example~\cite{Freed,FHLT}), which assigns to any closed oriented surface $S$ the count of $\Gamma$-bundles
$$
Z_\Gamma(\Sigma)=\# \{\Hom(\pi_1(\Sigma),\Gamma)/\Gamma\}\in \C 
$$ 
where as usual a bundle $\cP$ is weighted by $1/\Aut(\cP)$. This is the volume of  
the orbifold of $\Gamma$-bundles on the surface. To a circle, $Z_\Gamma$ assigns 
class functions:
$$
Z_\Gamma (S^1)=\C [\Gamma]^\Gamma=\C[\Gamma\adjquot {\Gamma}] \in \Vect^{fd}_\C,$$
the vector space of functions on the orbifold of $\Gamma$-bundles on the circle.
To a point, $Z_\Gamma$ assigns the category of finite-dimensional representations of $\Gamma$ (or $\C[\Gamma]$-modules)
$$
Z_\Gamma(pt) = \Rep^{fd}_\C(\Gamma) \in AbCat_\C.
$$
This is the category of finite-dimensional algebraic vector bundles on the
orbifold of $\Gamma$-bundles on a point. Equivalently we may consider $Z_\Gamma$ as taking values in the Morita 2-category of associative algebras, and assigning $Z_\Gamma(pt)=\C[\Gamma]$. The value of a field theory $Z$ on a point encodes the category of boundary conditions of $Z$ (with morphisms given by interfaces). Thus boundary conditions for $Z_\Gamma$ are given by representations of $\Gamma$ -- e.g., the space of functions $\C[X]$ on any finite $\Gamma$-set $X$. By the Cobordism Hypothesis~\cite{jacobTFT} this assignment in turn recovers the rest of the structure of the field theory.

In this paper we show that the Borel-Moore homologies of the complex character stacks $\Loc_G(S)$ are themselves the output on $S$ of an extended topological field theory $\cX_G$, a categorified analog of the finite group theory $Z_\Gamma$. The role of functions on finite sets in the construction of $Z_\Gamma$ is taken up by $\cD$-modules on varieties and stacks in the construction of $\cX_G$. In particular the value $\cX_G(S^1)$ on the circle is given by ``class $\cD$-modules", i.e. adjoint equivariant $\cD$-modules on the group $G$, or $\cD$-modules on $\Loc_G(S^1)\simeq G/G$. 

There is likewise a natural categorification of the group algebra $\C[\Gamma]$, namely the monoidal category $\cD(G)$ of $\cD$-modules on the group $G$ under convolution. In order to apply the cobordism hypothesis and construct a topological field theory out of a monoidal category $\cC$, we need it to satisfy a strong finiteness condition, {\em 2-dualizability}, and for the resulting TFT to be defined on all oriented surfaces we further need to equip $\cC$ with a nondegenerate cyclic trace - a {\em Calabi-Yau structure}.
A natural finiteness condition to ask for a monoidal category is that it be {\em rigid}, i.e. that [compact] objects have left and right duals. Indeed in~\cite{character} it is shown that rigid monoidal categories are 2-dualizable. This result was applied to construct the {\em unipotent character theory} of a reductive group $G$ starting from the Hecke category $\cD(\quot BGB)$. However $\cD(G)$ is not rigid when $G$ is not finite, and thus it was not clear (and indeed was not expected by experts) that $\cD(G)$ should be 2-dualizable. 

The construction of an extended field theory from a monoidal category $\cC$ however depends on $\cC$ only up to Morita equivalence (i.e., only its 2-category $\cC\module$ of module categories). As pointed out in~\cite{dario}, Gaitsgory's fundamental 1-affineness theorem~\cite{1affine} implies that $\cD(G)$ is Morita equivalent to the {\em rigid} monoidal category $\HC_G$ of Harish-Chandra bimodules (and hence is 2-dualizable). Recall that $$\HC_G=\cD_{(G)}(G)_{(G)}=\Mod_{\fg}^{(G)}=\Mod_{\fg\ot\fg}^G$$ is the convolution category of left- and right-weakly $G$-equivariant $\cD$-modules on $G$ -- or more concretely, $U\fg$-bimodules which are integrable to $G$ under the diagonal action. We further prove the following:

\begin{theorem}
The monoidal category $\cD(G)$ is a fully dualizable Calabi-Yau object of $\Alg_{(1)}(\Cat)$.
\end{theorem}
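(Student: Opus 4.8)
The plan is to treat the two assertions separately: full dualizability is a Morita-invariance argument, while the Calabi--Yau structure requires producing and trivializing the Serre/trace data.

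\emph{Full dualizability.} Since full dualizability of an object of $\Alg_{(1)}(\Cat)$ is a property of its image in the underlying $(\infty,2)$-category, it depends only on the Morita equivalence class, so I would first pass to the rigid model. By Gaitsgory's $1$-affineness theorem \cite{1affine}, as recorded in \cite{dario}, the monoidal categories $\cD(G)$ and $\HC_G$ are Morita equivalent. The category $\HC_G=\Mod_{\fg\ot\fg}^G$ is rigid, and by \cite{character} every rigid monoidal category is $2$-dualizable; in the $(\infty,2)$-category $\Alg_{(1)}(\Cat)$ this is the same as fully dualizable. Transporting along the Morita equivalence then gives full dualizability of $\cD(G)$ itself.

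\emph{Calabi--Yau structure.} By the cobordism hypothesis \cite{jacobTFT}, upgrading a fully dualizable object to one defined on $\Bord_2^{or}$ amounts to trivializing the canonical $SO(2)$-action on it, i.e. equipping it with an $SO(2)$-homotopy fixed-point structure compatible with the duality; for an algebra object this is precisely a nondegenerate cyclic trace, a Calabi--Yau structure. I would produce the trace as integration at the identity, the functor $\cD(G)\to\Vect$, $\cF\mapsto i_e^!\cF$ with $i_e\colon\{e\}\hookrightarrow G$, which yields the pairing $\langle\cF,\cG\rangle = i_e^!(\cF\star\cG)\simeq\int_G \cF\ot s^*\cG$, where $s$ denotes inversion. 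This is the $\cD$-module categorification of the symmetric Frobenius form on a finite group algebra sending $(g,h)$ to the coefficient of the identity in $gh$, and nondegeneracy should follow formally because inversion $s\colon G\to G$ exhibits the Verdier dual of $\cF$ as the required dual object, generalizing $g\mapsto g^{-1}$.

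The cyclic ($SO(2)$-invariance) and nondegeneracy conditions must be checked together, and this is where the group structure is essential. The obstruction to a Calabi--Yau structure is the Serre automorphism of the fully dualizable object, and trivializing it coherently amounts to trivializing the dualizing complex $\omega_G$; since $G$ is smooth in characteristic zero and left translation parallelizes it, one has a canonical $\omega_G\simeq\cO_G[\dim G]$, which is exactly the input promoting the Verdier self-duality of $\cD(G)$ to a \emph{symmetric} self-duality and making the trace cyclically invariant. The hard part will be assembling these coherences at the $\infty$-categorical level -- producing the $SO(2)$-homotopy fixed-point datum rather than merely its underlying pairing -- and for this I would again work in the rigid model $\HC_G$, where the trace is computed through the Hochschild homology $\cD(G)^G\simeq\cD(G/G)$ and a Poincar\'e-type self-duality of the stack $G/G$ supplies the nondegenerate cyclic pairing directly.
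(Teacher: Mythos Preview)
Your full dualizability argument matches the paper's exactly: pass to the Morita-equivalent rigid category $\HC_G$ via \cite{1affine,dario} and invoke \cite{character}.

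For the trace and its nondegeneracy you are also essentially aligned with the paper, though the packaging differs. The paper first identifies $\Tr(\cD(G))\simeq\cD(G\adjquot G)$ (using the invariants/coinvariants equivalence for $\cD(G)$-modules, which is where $\omega_G$ actually enters) and then sets $t=p_*e^!$ along $G\adjquot G\leftarrow pt/G\to pt$; composing with $\cD(G)\to\cD(G\adjquot G)$ and applying base change recovers your $i_e^!$, and the resulting pairing is $\pi_{G*}\nabla^!$ with $\nabla(g)=(g^{-1},g)$, just as you describe. Note however that Lurie's Calabi--Yau datum is an $S^1$-equivariant morphism out of $\Tr(\cA)$, not out of $\cA$: one must first compute $\Tr(\cD(G))$ and define $t$ there, and only afterwards does your functional on $\cD(G)$ appear as the induced pairing.

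The substantive gap is the $S^1$-equivariance, which you rightly flag as ``the hard part'' but do not resolve. Your proposed route---trivialize the Serre automorphism via $\omega_G\simeq\cO_G[\dim G]$ and appeal to a Poincar\'e-type self-duality of $G\adjquot G$---is not the paper's approach and, as stated, does not close: the Serre automorphism of an object of $\Alg_{(1)}(\Cat)$ is a bimodule category rather than an object of $\cD(G)$, trivializing it is only the first layer of an $SO(2)$-fixed-point datum, and self-duality of $\cD(G\adjquot G)$ in $\Cat$ does not by itself equip the specific functor $t$ with $S^1$-invariance. The paper instead argues geometrically. It identifies the cyclic bar object computing $\Tr(\cD(G))$, as a diagram indexed by the cyclic category, with $\cD$ applied to the stacks $\cM_G(S,I)$ of $G$-local systems on a framed circle $S$ trivialized away from a configuration of intervals $I$; the augmentation to $\cD(\cM_G(S))=\cD(G\adjquot G)$ and the trace $t$ (arising from capping $S$ by a disc, i.e.\ from $S\to pt$) are then exhibited as fibers of families over the moduli $BS^1$ of framed circles, so the required $S^1$-equivariance is built into the geometry. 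Without a comparable mechanism your sketch does not produce the coherent cyclic structure.
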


\begin{remark}
	As mentioned, the categorical group algebra $\cD(G)$ is (monoidally) Morita equivalent to $\HC_G$; it is also Morita equivalent (for $G$ reductive) to the universal Hecke category $\cD_{(H)}({\quot{N}{G}{N}})_{(H)}$ (these are both rigid monoidal). Each of these categories thus carries the structure of a Calabi-Yau algebra object.
	One can equally well consider $\cX_G$ as assigning any of these monoidal categories to a point. Alternatively, one can instead assign their categories of modules in $\Cat$ (also known as de Rham $G$-categories). 
\end{remark}

\subsection{Motivation: Boundary conditions and Hamiltonian actions}\label{quantum}
The cobordism hypothesis may be interpreted as the assertion that extended topological field theories are completely determined by the higher category formed by boundary conditions and their interfaces. For example in two dimensions a topological gauge theory with gauge group $G$ is determined by specifying the class of $G$-quantum mechanics that form boundary conditions, while in three dimensions we need to specify a class of two-dimensional field theories with $G$ symmetry. In our current setting, where we attach a monoidal category $\cA$ to a point, boundary conditions form the 2-category of $\cA$-module categories. Thus by specifying different categorical ``group algebras" (monoidal categories of sheaves on $G$ with convolution) we can construct different gauge theories. Three natural examples are given by local systems, Higgs sheaves and $\cD$-modules on $G$, leading to theories of Betti, Dolbeault and de Rham $G$-categories.

First let us mention the theory of Betti (or locally trivial) $G$-categories whose theory is beautifully developed by Teleman~\cite{teleman}. There we take for $\cA$ the monoidal category of local systems on $G$, i.e., sheaves on the homotopy type of $G$, or modules for the $\cE_2$-algebra of chains on the based loop space of $G$. Examples of module categories are provided by Fukaya categories of spaces with Hamiltonian actions of the maximal compact $G_c$ of $G$. In other words, boundary conditions come from equivariant $A$-models. This provides a mathematical model of the ``A-twist" of 3d $\cN=4$ super-Yang-Mills theory, for which Teleman develops complete ``character formulae". 

The classical character theory $\cX_{0,G}$ has as boundary conditions {\em Dolbeault $G$-categories}, or $B$-models of algebraic Hamiltonian $G$-spaces. Consider an Hamiltonian $G$-space, that is, a Poisson $G$-variety $X$ (with Poisson ring of functions $A_0=\CC[X]$) equipped with a $G$-equivariant, Poisson moment map $\mu:X\to \fgx$ (i.e. $\CC[\fgx]\to A_0$), or equivalently $$\underline{\mu}:X/G\to \fgx/G,$$ which generates the $\fg$-action on $X$. Standard examples are provided by $X=T^*M$ for $M$ a $G$-variety and $X=\fgx$ itself. 
This data can be reformulated as a Poisson map $X\to \fgx$ equipped with an 
action of $T^*G$ (as groupoid over $\fgx$) on $X$. This endows $\QC(X)$ with the structure of module category for the monoidal category $\QC(T^*G)$ of Higgs sheaves on $G$. 
Equivalently, the equivariant moment map $\underline{\mu}$ endows $\QC(X/G)$ with the structure of $\HC_{0,G}=\QC(\fgx/G)$-module category, which corresponds to $\QC(X)$ under the Morita equivalence between $\HC_{0,G}$ and $\QC(T^*G)$. 

The character theory $\cX_G$ has as boundary conditions module categories $\cD(G)\module$ for $\cD(G)$, known as de Rham, or strong, $G$-categories, whose study plays a central role in geometric representation theory and the geometric Langlands correspondence~\cite{BDHecke}. They arise from quantum analogs of Hamiltonian $G$-spaces -- e.g., deformation quantized $B$-models of Hamiltonian $G$-spaces. 
Such an analog is provided by a strong or Harish-Chandra action of $G$ on an associative algebra $A$: i.e., an action of $G$ on $A$ by algebra automorphisms, for which the Lie algebra action is made internal by means of a homomorphism $\mu^*:U\fg\to A$. Standard examples are the ring of differential operators $A=\Gamma(\cD_M)$ for a $G$-space $M$ and $A=U\fg$ itself. In this case the action of $G$ on the category $A\module$ is enhanced to the structure of module category for the monoidal category $\cD(G)$. 
Equivalently, applying the Morita equivalence between $\cD(G)$ and $\HC_G$, the weak $G$-invariants $(A\module)^{(G)}$ form a module category for $\HC_G$. 

\subsection{Supersymmetric gauge theory context}
The character field theory $\cX_{G}$ (and its deformation $\cX_{\hbar,G}$) is a mathematical model (as an extended TFT) for a ``B-type" topological twisted form of
maximally supersymmetric ($\cN=8$) three-dimensional gauge theory with gauge group the compact form $G_c$ of $G$. Namely, 3d $\cN=8$ super Yang-Mills theory is a dimensional reduction of 4d $\cN=4$ super Yang-Mills, and we consider a dimensionally reduced analog of the ``$\widehat{B}$-model"~\cite{WittenAtiyah} -- the Kapustin-Witten twist with the B-type supercharge (corresponding to the parameter $\Psi=\infty$). In 3d language, we are considering $\cN=4$ super-Yang-Mills with (massless) adjoint matter. After the topological twist the adjoint scalars become adjoint one-forms, which we add to the gauge field to define a complex $G$-connection. The graded parameter $\hbar$ arises as a Nekrasov $\epsilon$-parameter: it comes as the 3d limit of compactifying 4d $\cN=4$ on a circle with an $\Omega$-background around the circle. In other words, we consider a supercharge $Q$ that squares to a translation along the compactification direction~\cite{WittenAtiyah}. This deformation does not affect the underlying supersymmetric field theory~\cite{NekrasovWitten} but changes the topological theory. 

The Kapustin-Witten $\widehat{B}$-model compactified on a circle times a surface $S$ produces the derived functions on $\Loc_G(S\times S^1)$, the inertia stack (or derived loop space) of the character stack. Our dimensionally reduced theory $\cX_G(S)$ produces differential forms (or polyvector fields) on the character stack, which is the linearization of functions on the loop space along constant loops.

The identification of the character theory as a dimensional reduction of the Kapustin-Witten $\widehat{B}$-model can also be seen directly from our assignment of the 2-category $\Mod_{\cD_{\hbar}(G)}\simeq \Mod_{\HC_{\hbar,G}}$ as the boundary conditions in $\cX_{\hbar,G}$. Recall that boundary conditions in a circle compactification of a TFT $\cZ$ may be interpreted as codimension 2 defects of $\cZ$.
Since the category $\HC_{0,G}$ is known~\cite{bezfink} to be identified with the derived Satake category, i.e., with line operators (or endomorphisms of the trivial surface defect) in the Kapustin-Witten theory, our boundary conditions $\Mod_{\HC_{0,G}}$ are identified with surface defects in the 4d theory which have nontrivial interface with the trivial surface defect, a topological way of imposing that the compactification circle is taken to be small.

\subsection{Acknowledgments}
We would like to acknowledge the National Science Foundation for its support through individual grants DMS-1103525 (DBZ) and DMS-1502178 (DN). We would like to thank Chris Schommer-Pries and Hendrik Orem for helpful discussions of $(1,2)$-TFTs and idempotents in the character theory, respectively.

\section{Calabi-Yau structure on the categorical group algebra}

The goal of this section is to prove the following result.
\begin{theorem}\label{thm-cy}
	The monoidal category $\cD(G)$ carries the structure of a Calabi-Yau algebra object in $\Cat$.  
 
\end{theorem}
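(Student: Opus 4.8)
The plan is to exhibit the Calabi--Yau structure as a nondegenerate, cyclically symmetric trace on the algebra object $\cD(G)$; the underlying $2$-dualizability is already available, since $\cD(G)$ is Morita equivalent to the rigid monoidal category $\HC_G$ (by \cite{dario,1affine}) and rigid monoidal categories are $2$-dualizable by \cite{character}. A Calabi--Yau structure then amounts to a map to the unit $\Vect$ of $\Cat$ from the trace of the identity endomorphism of $\cD(G)$, namely its Hochschild homology $\mathrm{HH}_\ast(\cD(G))$ --- which is the value $\cD(G)^G$ of the theory on the circle --- subject to the condition that the induced self-pairing of $\cD(G)$ be an equivalence, together with the coherent $S^1$-invariance (cyclicity) promoting it to an $SO(2)$-fixed point.

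First I would construct the trace, categorifying the trace $\sum_g a_g\,g\mapsto a_e$ on the group algebra $\C[\Gamma]$. I take the costalk at the identity $e\in G$, namely the functor $i_e^{!}\colon\cD(G)\to\Vect$ right adjoint to the unit $\delta_e=i_{e,\ast}k$. Because $e$ is a fixed point of the adjoint action, the cyclic invariance $i_e^{!}(\cF\ast\cG)\simeq i_e^{!}(\cG\ast\cF)$ holds, so $i_e^{!}$ factors through the universal trace $\cD(G)\to\mathrm{HH}_\ast(\cD(G))=\cD(G)^G$, yielding $\mathrm{Tr}\colon\cD(G)^G\to\Vect$. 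Concretely this is the equivariant costalk $q_\ast\,p^{!}$ along the inclusion $p\colon BG=\Loc_G(D^2)\hookrightarrow G/G=\Loc_G(S^1)$ of the trivial local system, i.e. the functor the theory is forced to assign to the cap.

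Next I would check nondegeneracy by computing the induced pairing $\langle\cF,\cG\rangle=i_e^{!}(\cF\ast\cG)$. Writing convolution as $\cF\ast\cG=m_\ast(\cF\boxtimes\cG)$ along the multiplication $m\colon G\times G\to G$ and applying base change over the Cartesian square with fibre $m^{-1}(e)=\{(g,g^{-1})\}\cong G$, I obtain
\[
\langle\cF,\cG\rangle\;\simeq\;\Gamma_{dR}\bigl(G,\ \cF\otimes^{!}\nu^{!}\cG\bigr),
\]
where $\nu\colon G\to G$ is inversion. This is the canonical Verdier self-duality pairing of $\cD(G)$ precomposed with the involution $\nu$; since $\nu$ is an automorphism and the Verdier pairing is nondegenerate, $\langle-,-\rangle$ is nondegenerate, exhibiting $\cD(G)$ as self-dual with counit $\mathrm{Tr}$. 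The same computation after the coordinate swap identifies $\langle\cF,\cG\rangle$ with $\langle\cG,\cF\rangle$ via $g\mapsto g^{-1}$, giving the symmetry of the form on objects.

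The main obstacle is to upgrade this object-level symmetry to a genuinely coherent cyclic structure --- an $SO(2)$-homotopy fixed point rather than a bare symmetric pairing --- and, in tandem, to pin down the orientation data (equivalently the cohomological shift by $\dim G$) reconciling the $\ast$- and $!$-pushforwards and identifying the two adjoints $u^{L}\simeq u^{R}$ of the unit functor. I would argue structurally rather than by hand: the costalk trace is the counit of the Verdier self-duality of $\cD(G)$, which is compatible with the $S^1$-action on $\mathrm{HH}_\ast(\cD(G))=\cD(G)^G$ (the cyclic rotation), so that $\mathrm{Tr}$ is $S^1$-invariant by construction, while the trivialization of the canonical bundle of the group $G$ supplies the orientation making $u^{L}\simeq u^{R}$ up to the expected shift. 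As flagged in the excerpt, the filtered refinement over $\cD_\hbar(G)$ proceeds identically and would be recorded only as a remark.
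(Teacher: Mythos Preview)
Your trace map and nondegeneracy argument are essentially the paper's: the paper identifies $\Tr(\cD(G))\simeq\cD(G\adjquot G)$ and then defines $t=p_\ast e^!$ along $G\adjquot G\leftarrow pt/G\rightarrow pt$, which is exactly your ``equivariant costalk'' $q_\ast p^!$, and the nondegeneracy is checked by the same base-change computation producing $\pi_{G\ast}\nabla^!$ with $\nabla(g)=(g^{-1},g)$ (your pairing twisted by inversion agreeing with Verdier self-duality). So on these points you and the paper coincide.

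The genuine gap is the $S^1$-equivariance, which you yourself flag as ``the main obstacle'' and then dispatch with the phrase ``$S^1$-invariant by construction.'' It is not by construction: the assertion that ``the costalk trace is the counit of Verdier self-duality, which is compatible with the $S^1$-action'' is exactly the content to be proved, and nothing you have written supplies it. Knowing $i_e^!(\cF\ast\cG)\simeq i_e^!(\cG\ast\cF)$ as an unstructured isomorphism is only the $\pi_0$ shadow of what is needed; a Calabi--Yau structure requires a homotopy-coherent $S^1$-fixed-point datum on the map out of $\Tr(\cD(G))$, and invoking the trivial canonical bundle of $G$ does not by itself produce that coherent datum in this categorified setting. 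The paper's mechanism is different and concrete: it realizes the cyclic bar complex of $\cD(G)$ geometrically, by assigning to each configuration $I$ of intervals in a framed circle $S$ the stack $\cM_G(S,I)$ of $G$-local systems on $S$ trivialized off $I$, so that the entire augmented cyclic diagram $\cD(\cM_G(S,I))\to\cD(\cM_G(S))\simeq\cD(G\adjquot G)$ visibly lives in a family over $B\mathrm{Diff}^+(S^1)\simeq BS^1$. The same device, applied to the map $S\to pt$ (thought of as the boundary inclusion of a disc), exhibits the diagram $G\adjquot G\leftarrow pt/G\rightarrow pt$ as $S^1$-equivariant, hence makes $t=p_\ast e^!$ an $S^1$-equivariant functor. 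This moduli-of-local-systems packaging is the missing idea in your proposal; without it (or a substitute of comparable precision) the cyclic structure is asserted rather than established.
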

This is analogous to the claim that the group algebra $\C\Gamma$ of a finite group $\Gamma$ is a symmetric Frobenius algebra. Recall that, as $\cD(G)$ is Morita equivalent to the rigid monoidal category $\HC_G$ ~\cite{dario}, it is also fully dualizable as an object of $\Alg_{(1)}(\Cat)$. Thus, Theorem~\ref{thm-cy} together with Lurie's cobordism hypothesis~\cite{jacobTFT}, shows that $\cD(G)$ defines a 2d TFT $\cX_G$ valued in $\Alg_{(1)}(\Cat)$. 

We also describe a filtered version of the TFT $\cX_G$, in which $\cD(G)$ is replaced by its Rees construction $\cD_\hbar(G)$:

\begin{theorem}
The filtered monoidal category $\cD_\hbar(G)$ defined by the Rees construction on $\cD(G)$ carries the structure of a Calabi-Yau algebra object in $\Cat_{filt}$. 
\end{theorem}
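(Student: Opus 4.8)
The plan is to run the proof of Theorem~\ref{thm-cy} essentially verbatim, but relative to the base $\A^1/\Gm$, using that by construction $\cD_\hbar(G)$ is an algebra object of $\Cat_{filt}\simeq\Mod_{\QC(\A^1/\Gm)}(\Cat)$ — a $\Gm$-equivariant family of monoidal categories over $\A^1$ whose fiber at $\hbar=1$ is $\cD(G)$ and whose associated graded at $\hbar=0$ is $\QC(T^*G)$. The guiding principle is that the Calabi-Yau datum on $\cD(G)$ is produced by a construction using only (i) the group structure of $G$ and (ii) Verdier self-duality of $\cD(G)$ via the dualizing complex, both of which have evident Hodge-filtered analogs over $\A^1/\Gm$: the group object $G_{Hod}$ refining Simpson's degeneration of $G_{dR}$, and the relative self-duality of the filtered convolution category. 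Since the Rees construction is symmetric monoidal, it transports algebra, dualizability, and trace data along with it, so it should carry the entire Calabi-Yau structure of $\cD(G)$ to one on $\cD_\hbar(G)$ in $\Cat_{filt}$.

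Concretely, I would first establish full dualizability of $\cD_\hbar(G)$ in $\Alg_{(1)}(\Cat_{filt})$ by exhibiting a filtered Morita equivalence to a category that is rigid relative to the base. The filtered analog $\HC_{\hbar,G}$ of $\HC_G$ interpolates between $\HC_G$ and $\HC_{0,G}=\QC(\fgx/G)$; one shows it is rigid as a monoidal object of $\Cat_{filt}$ and that $\cD_\hbar(G)\simeq\HC_{\hbar,G}$ relatively, via a $\QC(\A^1/\Gm)$-linear (Rees) version of Gaitsgory's 1-affineness~\cite{1affine,GR}. Rigidity then yields full dualizability in $\Cat_{filt}$ exactly as in the unfiltered case~\cite{character}. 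Second, I would construct the trace morphism as the Rees construction of the unfiltered one: the Hochschild homology of $\cD_\hbar(G)$, i.e.\ its value on the circle, is the family $\cD_\hbar(G)^G$ interpolating between $\cD(G/G)$ and $\QC(T^*(G/G))$, and the filtered trace to the unit $\QC(\A^1/\Gm)$ is the $\Gm$-equivariant family of integration/pushforward maps built from the relative dualizing complex.

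The nondegeneracy of this trace — the part of the datum that makes the structure oriented rather than merely framed — is where the substantive work lies, and I would argue it fiberwise. Nondegeneracy asserts that the induced map from $\cD_\hbar(G)$ to its bimodule dual is an equivalence of $\cD_\hbar(G)$-bimodules in $\Cat_{filt}$. For a suitably flat family, such an equivalence of sheaves of categories over $\A^1/\Gm$ can be detected on the two points of $\A^1/\Gm$, so it suffices to check it on the generic orbit at $\hbar=1$ — which is Theorem~\ref{thm-cy} — and on the associated graded at $\hbar=0$, where it becomes the assertion that $\QC(T^*G)$ is Calabi-Yau, i.e.\ that the classical theory $\cX_{0,G}$ of~\cite{BFN} is oriented. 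The one genuinely new ingredient is the compatibility of the trace with passage to associated graded: that the de Rham-to-Dolbeault degeneration carries the $\cD(G)$ trace to the $\QC(T^*G)$ trace, equivalently that the filtration on the pairing is strict so that $\mathrm{gr}$ of a nondegenerate pairing stays nondegenerate.

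The main obstacle I anticipate is foundational rather than conceptual: setting up this relative theory rigorously. One must establish the Rees versions of 1-affineness and of the rigidity of $\HC_{\hbar,G}$ over $\QC(\A^1/\Gm)$, confirm that the duality and trace functors are $\QC(\A^1/\Gm)$-linear and preserve the compact (filtered-flat) generators, and verify that taking associated graded commutes with forming duals and traces — precisely the ``notationally cumbersome and less documented'' relative homological algebra flagged in the remark following Theorem~\ref{filtered main}. Granting this formalism, the filtered statement follows from the unfiltered Theorem~\ref{thm-cy} and its classical counterpart by the $\Gm$-equivariant flatness argument above.
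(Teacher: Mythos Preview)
Your overall strategy --- running the proof of Theorem~\ref{thm-cy} relative to $\A^1/\Gm$, using the filtered Morita equivalence with the rigid $\HC_{\hbar,G}$ for full dualizability and the Hodge-filtered $\cD_\hbar$-module functoriality for the trace and its $S^1$-equivariance --- is precisely the paper's approach, as signaled by the remark following Theorem~\ref{filtered main} and the parenthetical filtered extensions interspersed through Section~2.

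Where you diverge is in the nondegeneracy step, and here you take an unnecessary detour. You propose to verify nondegeneracy fiberwise at $\hbar=1$ (Theorem~\ref{thm-cy}) and $\hbar=0$ (the classical $\QC(T^*G)$ case), and then appeal to a detection principle for equivalences of sheaves of categories over $\A^1/\Gm$. The paper's implicit argument is more direct: Lemma~\ref{lemma-nd} proves nondegeneracy by writing down an explicit coevaluation $\nabla_*\pi_G^!$ and checking the Zorro identities via base change. Since the $\cD_\hbar$-module formalism outlined in Section~\ref{filtered section} (from~\cite{GR}) furnishes $f_*,f^!$ satisfying base change over $\A^1/\Gm$, the identical computation goes through in $\Cat_{filt}$ without modification. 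No fiberwise check, no detection principle, and no separate appeal to the $\hbar=0$ theory is needed; the classical Calabi-Yau structure instead falls out as the associated graded of the filtered one.

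Your route is not incorrect, but it front-loads exactly the foundational burdens you flag in your final paragraph --- conservativity of the fiber functors on $\Cat_{filt}$ and compatibility of $\mathrm{gr}$ with dual and trace formation --- whereas the paper's direct base-change argument sidesteps them.
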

Likewise, $\cD_\hbar(G)$ is Morita equivalent to a rigid filtered monoidal category $\HC_{\hbar,G}$, the Rees construction on $\HC_G$, and so defines a topological field theory $\cX_{\hbar,G}$ valued in the Morita theory of filtered monoidal categories, $\Alg_{(1)}(\Cat_{filt})$.

\subsection{Preliminaries}\label{prelim}
We will work over a fixed field $k$ of characteristic zero. Let $\Cat$ denote the $(\infty,1)$ category of $k$-linear, stable, presentable $\infty$-categories where morphisms are functors which are left adjoints (equivalently, functors which preserve small colimits). Recall that $\Cat$ comes equipped with a symmetric monoidal structure $\otimes$. The unit object of $\Cat$ is the $\infty$-category $\Vect_k$ of (differential graded) $k$-vector spaces. We denote by $\Alg(\Cat)$ the $(\infty,1)$-category of algebra objects in $\Cat$, i.e., monoidal dg categories, and by $\Alg_{(1)}(\Cat)$ the Morita $(\infty,2)$-category of monoidal categories~\cite{jacobTFT}. In other words the objects of $\Alg_{(1)}(\Cat)$ are algebra objects in $\Cat$ and the morphism $(\infty,1)$-categories are given by bimodules, $Map_{\Alg_{(1)}(\Cat)}(\cA,\cB)=\Mod_{\cA\ot\cB^{op}}(\Cat)$. (Note that we will not need the theory of symmetric monoidal $(\infty,2)$-categories, other than to quote the cobordism hypothesis and verify its explicit conditions.)

Likewise we consider a filtered version, in which $\Vect$ is replaced by the rigid symmetric monoidal category $\QC(\A^1/\Gm)$ in $\Cat$. Indeed as explained in~\cite[IV.5.1]{GR}, for a category $\cC\in\Cat$, filtered objects of $\cC$ (i.e., functors from $\Z$ to $\cC$) are identified with objects of $(\cC\otimes\QC(\A^1))^\Gm$. Following~\cite{simpson} we take as the definition of a filtration a $\Gm$-equivariant family over $\A^1$, and so define $\Cat_{filt}:=\Cat_{\A^1/\G_m}$, the symmetric monoidal category of $\QC(\A^1/\Gm)$-modules in $\Cat$, which by~\cite{1affine} is equivalent to quasicoherent sheaves of categories over $\A^1/\G_m$. 
The symmetric monoidal category $\Cat_{\A^1/\G_m}$ is in particular ``good" in the sense of~\cite[Definition 4.1.7]{jacobTFT}, i.e. admits sifted colimits, which are preserved by the monoidal structure. We thus have filtered monoidal categories $\Alg(\Cat_{filt})$ and their Morita theory $\Alg_{(1)}(\Cat_{filt})$, sharing the formal properties of their unfiltered version.

Given an algebra object $\cA$ in $\Cat$ (or any good symmetric monoidal category, such as $\Cat_{filt}$), recall~\cite{jacobTFT} that a \emph{Calabi-Yau} structure on $\cA$ is the data of an $S^1$-equivariant morphism
\[
t: \Tr(\cA) \to \Vect
\]
such that the composite morphism
\[
\cA \times \cA \xrightarrow{\ast} \cA \to \Tr(\cA) \xrightarrow{t} \Vect
\]
is the evaluation map exhibiting $\cA$ as self dual in $\Cat$.

\subsection{Cyclic Bar Construction}\label{cyclic section}
Given an algebra object $\cA$ in $\Cat$
the \emph{bar resolution} of $\cA$ is the simplicial object $\cA^{\bullet +2}$ of $\Cat$, where the face maps are given by the monoidal operations and the degeneracies are given by unit maps (see e.g.~\cite{BFN}). The bar resolution is augmented by $\cA$, and the augmentation gives an equivalence
 $$
  \xymatrix{
  \cA \simeq 
  \left| \cA^{\otimes (\bullet+2)} \right| 
  }
  $$
  where $| \quad |$ denotes the colimit (also called geometric realization) of the corresponding diagram in $\Cat$. The bar construction can be used to calculate the \emph{center}
  \begin{align*}
\cZ(\cA) =  \Hom_{\cA \otimes \cA^{op}}(\cA, \cA) &= \Hom_{\cA \otimes \cA^{op}}(\left| \cA^{\otimes (\bullet+2)} \right|,  \cA)\\
 &=  \Tot\left\{\Hom_{} (\cA^{\otimes \bullet},  \cA) \right\} \\
\end{align*}
and \emph{trace}
\begin{align*}
 \Tr(\cA) = \cA \otimes_{\cA \otimes \cA^{op}} \cA &= \left| \cA^{\otimes (\bullet+2)} \right| \otimes_{\cA \otimes \cA^{op}} \cA \\
  &= \left|\cA^{\otimes (\bullet+1)} \right|
\end{align*}

In order to express the cyclic symmetry of the trace, it is convenient to take a more geometric model as factorization or topological chiral homology $Tr(\cA)\simeq \int_{S^1}\cA$, which we now recall following ~\cite[Section 5.3.3]{l-ha} and~\cite{character}.

First, given a framed circle $S$ (for example, the standard circle $S^1=[0,1]/\hspace{-0.25em}\sim$ with its natural induced orientation) let $I_S$ be the $\infty$-category given by the nerve of the poset of framed embeddings of finite disjoint unions of open intervals in $S$ with partial order given by framed inclusions. We note that the $\infty$-category $I_S$ is equivalent to the opposite category of the \emph{cyclic category} $\Lambda$; an $I_S$ shaped diagram in a category is known as a \emph{cyclic object}.

Given a basepoint $0\in S$ we let $I'_{S,0}$ denote the nerve of the poset of framed embeddings of intervals, in which one interval is marked, and the marked interval must cover the basepoint. Note that $I'_{S,0}$ is equivalent to the opposite of the simplex category. There is a functor $I'_{S,0}\to I_S$, which forgets the marking; thus we say that every cyclic object ($I_S$-shaped diagram) has an underlying simplicial object ($I_{S}'$-shaped diagram), see~\cite[Lemma 5.3.3.10]{l-ha}.

Given an $E_1$-algebra object $\cA$ in a symmetric monoidal category $\cC$, we have a cyclic object given by the functor 
$$
\xymatrix{
\cA^{\otimes -}:I_S\ar[r] &  \cC
& \cA^{\otimes -}(I) = \cA^{\otimes \pi_0(I)}
}$$
whose structure on morphisms is given by the $E_1$-structure on $\cA$. By definition, the factorization (or topological chiral) homology of $\cA$ over $S$ is the colimit
$$
\xymatrix{
\int_{S} \cA= \colim_{I_S} \cA^{\otimes -}
}$$
Observe that  the above makes sense for families of framed circles, and so exhibits $\int_{S} \cA$ as the fiber of an $\oo$-local system  over
the moduli of framed circles $B{Diff^+}(S^1) = BS^1$ (or equivalently, $\int_{S^1} \cA$ carries an action of $S^1$). 

Choosing a basepoint of $0\in S$, we see that the underlying simplicial object of $\cA^{\otimes -}$ is the bar complex which computes the trace, and thus (by~\cite[Theorem 5.3.3.11]{l-ha})
\[
\int_{S} \cA = \colim_{I_S} \cA^{\otimes -} \xleftarrow{\sim} \colim_{I_{S,0}'} \cA^{\otimes -} = \Tr(\cA).
\]
In particular, $\Tr(\cA)$ carries a canonical $S^1$-action.

\subsection{$\cD$-modules and filtered $\cD$-modules}\label{filtered section}
The general theory of $\cD$-modules in derived algebraic geometry is developed in the book~\cite{GR} as an instance of the theory of ind-coherent sheaves on (ind-)inf-schemes. This is a powerful setting that extends the setting of derived schemes to include objects such as quotients by formal groupoids. An important example of an inf-scheme is Simpson's de Rham space $X_{dR}$~\cite{simpson} for a scheme $X$, which can be described as the quotient of $X$ by the formal groupoid associated to the tangent Lie algebroid, namely the formal completion of the diagonal of $X$. Gaitsgory and Rozenblyum define $\cD$-modules on $X$ as ind-coherent sheaves on $X_{dR}$ and deduce the standard functorial properties of $\cD$-modules from their general formalism for ind-coherent sheaves~\cite[III.4]{GR}. This definition can easily be compared to the traditional definition for $X$ a smooth classical scheme, by realizing sheaves on $X_{dR}$ as modules for the enveloping algebra $\cD_X$ of the tangent Lie algebroid.
 
Given a Lie algebra $\fg$ over $k$, we may obtain a Lie algebra $\fg_\hbar$ over $\A^1/\G_m$ by rescaling the Lie bracket by $\hbar\in \A^1$, ~\cite[IV.5.1]{GR}. The enveloping algebra $U\fg_\hbar$ is (in the classical setting) the Rees construction on $U\fg$, deforming $U\fg$ to the graded algebra $Sym\fg$. We can preform an analogous construction for Lie algebroids, in particular for the tangent Lie algebroid of a scheme $X$~\cite[IV.5.3]{GR}. This results in another example of an inf-scheme, Simpson's Hodge space $X_{Hod}\to \A^1/\G_m$, which embodies the Hodge filtration on (nonabelian) de Rham cohomology (see also~\cite{simpsongeometric}). It is defined as the quotient of $X$ over $\A^1/\G_m$ by the formal groupoid given by the deformation to the normal cone of the de Rham groupoid, corresponding to rescaling the Lie bracket on the tangent sheaf by $\hbar$. In other words, sheaves on the Hodge space (for $X$ a smooth classical scheme) are modules for the Rees construction for the filtered algebra $\cD_X$. The fiber over $\hbar=0$ is given by the category $\QC(T^*X)$ as a graded category (i.e., with its standard $\Gm$ action). We denote the resulting category $IndCoh(X_{Hod})$ by $\cD_\hbar(X)$ and refer to it informally as filtered $\cD$-modules.
The formalism of~\cite[III.4]{GR} for functoriality of $\cD$-modules carries over to the Hodge setting, once one observes that a map $f:X\to Y$ defines a {\em filtered} map of de Rham groupoids, hence a map $f_{Hod}:X_{Hod}\to Y_{Hod}$ which shares the good properties of $f_{dR}$ (in particular for $f$ nil-schematic, $f_{Hod}$ is inf-schematic). As a result one has the basic $\cD$-module formalism of $f_*,f^!$ and their duality in the filtered setting.

\subsection{Categorical (co)invariants}
Recall that $G$ denotes an affine algebraic group over $k$. Convolution defines monoidal structures on the categories $\QC(G)\simeq IndCoh(G)$, $\cD(G)$ and $\cD_\hbar(G)$ of quasicoherent sheaves, $\cD$-modules and filtered $\cD$-modules. We would like to compare modules for these monoidal categories with modules for $Rep(G)=\QC(G\bs G/G)=\Vect^{(G)}$, the category of Harish-Chandra bimodules $$\HC_G=\QC(G\bs G_{dR}/G)\simeq \Mod_{\fg}^{(G)}$$ and its filtered version $$\HC_{\hbar,G}=\QC(G\bs G_{Hod} /G)\simeq \Mod_{\fg_\hbar}^{(G)}.$$ 

Recall that if $\cC$ is a $G$-category (i.e. a $\cD(G)$-module), then the \emph{coinvariants} are defined by $\cC_G = \cC \otimes_{\cD(G)} \Vect$ and the invariants by $\cC^G = \Hom_{\cD(G)}(\Vect, \cC)$. Explicitly, the coinvariants are computed as the colimit of the simplicial diagram 
\begin{equation}\label{diagramcoinvariants}
\cC \leftleftarrows \cC\otimes \cD(G) \ldots
\end{equation}
and the invariants are computed as the limit of the cosimplicial diagram
\begin{equation}\label{diagraminvariants}
\cC \rightrightarrows \Hom(\cD(G),\cC) \ldots
\end{equation} 
Similar definitions hold for $\cC$ an algebraic $G$-category ($\QC(G)$-module), giving rise to the notion of weak $G$ (co)invariants $\cC^{(G)},\cC_{(G)}$  and for
 $\cC\in \Cat_{filt}$ a $\cD_\hbar(G)$-module, with the role of the $\cD(G)$-module $\Vect$ being taken by the unit $\QC(\A^1/\G_m)$,i.e., the augmentation module over $\cD_\hbar(G)$.

The key technical input to the proof of Theorem \ref{thm-cy} is Gaitsgory's 1-affineness theorem~\cite{1affine} for $pt/G$, which asserts that the functor of weak $G$-invariants defines an equivalence $$\Mod_{\QC(G)}\simeq \Mod_{Rep(G)}.$$
Thus algebraic $G$-categories are described as modules over the rigid symmetric monoidal category $Rep(G)$. We will utilize Beraldo's ensuing parallel story for $\cD(G)$-modules \cite{dario} identifying (strong) coinvariants and invariants for a group action on a category. We will use this to identify the center and trace of the monoidal category $\cD(G)$ (later we will need to keep track of extra structures on these identifications, so we take extra care to make sure everything can be understood explicitly).

\begin{theorem} \cite[Proposition 3.5.6, Theorem 3.5.7]{dario} \label{Morita thm} There is a natural equivalence
$$\Mod_{\cD(G)}\simeq\Mod_{\HC_G}$$ between $\cD(G)$ and the rigid monoidal category $\HC_G$.
Likewise there is a natural filtered equivalence $$\Mod_{\cD_\hbar(G)}\simeq\Mod_{\HC_{\hbar,G}}$$ between $\cD_\hbar(G)$ and the rigid filtered monoidal category $\HC_{\hbar,G}$. 
\end{theorem}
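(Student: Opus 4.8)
The statement is a Morita equivalence, so the plan is to exhibit a distinguished $\cD(G)$-module which is a Morita generator and whose monoidal category of endomorphisms is $\HC_G$; the argument runs parallel to Gaitsgory's proof of $1$-affineness for $pt/G$ (the weak statement $\Mod_{\QC(G)}\simeq\Mod_{\Rep(G)}$ quoted above), with the triple $\QC(G),\Rep(G),\Vect$ replaced by $\cD(G),\HC_G,\Mod_\fg$. Concretely, I take the generating module to be the category $\cM=\Mod_\fg$ of $\fg$-modules equipped with its strong (Harish--Chandra) $G$-action; equivalently $\cM$ is the weak right-invariants $\cD(G)^{(G)}$ of the regular bimodule (so $\cD(G)^{(G)}\simeq\Mod_\fg$ because right-invariant differential operators on $G$ form $U\fg$), and the functor co-represented by $\cM$, sending a strong $G$-category $\cC$ to its weak invariants $\cC^{(G)}$, is the candidate equivalence $\Mod_{\cD(G)}\to\Mod_{\HC_G}$.

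The first step is to identify the endomorphisms. I would compute $\End_{\cD(G)}(\cM)$ by writing $\cM$ as one-sided weak invariants of the regular module and using that $\cD(G)$ is dualizable (indeed self-dual) in $\Cat$ -- valid since $G$ is affine, so $\cD(G)$ is compactly generated -- to present $\cD(G)$-linear endofunctors of $\cM$ as the two-sided weak invariants $\cD(G)^{(G\times G)}$. Under $\cD(G)^{(G)}\simeq\Mod_\fg$ this is exactly the category of $U\fg$-bimodules carrying a compatible diagonal (adjoint) $G$-action, i.e. $\Mod_{\fg\ot\fg}^G=\HC_G$, with composition of endofunctors matching the tensor product $\ot_{U\fg}$ and the identity functor matching the unit $U\fg$. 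This yields a monoidal equivalence $\End_{\cD(G)}(\cM)\simeq\HC_G$.

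The crux, and the step I expect to be the \emph{main obstacle}, is generation: I must show that weak invariants $\cC\mapsto\cC^{(G)}$ is conservative and satisfies the hypotheses of the Barr--Beck--Lurie theorem, so that $\cM$ is a Morita generator and $\Mod_{\cD(G)}\simeq\Mod_{\End_{\cD(G)}(\cM)}=\Mod_{\HC_G}$. This is precisely the de Rham analog of $1$-affineness and is where affineness of $G$ and characteristic zero are essential, since conservativity can fail otherwise. My plan is to bootstrap from the weak case: every strong $G$-action has an underlying weak $G$-action, whose invariants are already controlled by the cited equivalence $\Mod_{\QC(G)}\simeq\Mod_{\Rep(G)}$; the discrepancy between weak and strong invariants is governed by the residual infinitesimal ($U\fg$-)action, and tracking this reduces the conservativity of strong invariants to the known weak statement. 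This is the technical heart of Beraldo's argument.

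Finally I would record rigidity of $\HC_G$ and the filtered upgrade. Rigidity follows by realizing $\HC_G=\QC(G\bs G_{dR}/G)$ as the convolution category of a well-behaved quotient stack: its unit $U\fg$ is compact and every compact Harish-Chandra bimodule is dualizable, its dual being the flipped bimodule, so Gaitsgory's rigidity criterion for $\QC$ of nice quotient stacks applies. The filtered statement is obtained by running the entire argument $\QC(\A^1/\Gm)$-linearly in $\Cat_{filt}$: the generator becomes $\Mod_{\fg_\hbar}$, its endomorphisms become $\Mod_{\fg_\hbar\ot\fg_\hbar}^G=\HC_{\hbar,G}$, and the conservativity input holds in the Hodge family using the functoriality of filtered $\cD$-modules set up above, with the special fiber $\hbar=0$ recovering the classical equivalence $\Mod_{\QC(T^*G)}\simeq\Mod_{\QC(\fgx/G)}$ via $1$-affineness of $\fgx/G$.
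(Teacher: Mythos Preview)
Your proposal is correct and follows essentially the same route as the paper: both take weak invariants $\cC\mapsto\cC^{(G)}$ as the key functor from $\Mod_{\cD(G)}$ to $\Mod_{\Rep(G)}$, deduce conservativity and continuity by factoring through the forgetful functor to $\Mod_{\QC(G)}$ and invoking Gaitsgory's $1$-affineness of $pt/G$, identify the resulting monad (equivalently, $\End_{\cD(G)}(\Mod_\fg)$) with $\HC_G=\Mod_\fg^{(G)}$, and run the identical argument over $\A^1/\Gm$ for the filtered version. The only place the paper is more concrete is rigidity: rather than asserting that compact Harish--Chandra bimodules are dualizable and appealing to a general criterion, the paper observes that the monoidal functor $\pi_*:\Rep(G)\to\HC_G$ induced by the inf-proper map $\pi:pt/G\to G\bs G_{dR}/G$ preserves compactness, so the images of the compact dualizable generators of $\Rep(G)$ give compact dualizable generators of $\HC_G$.
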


\begin{proof}
We sketch Beraldo's proof and its straightforward extension to the filtered setting. 
Given a $\cD(G)$-module we can take its weak invariants, i.e., invariants as a $\QC(G)$-module under the monoidal induction functor $\QC(G)\to \cD(G)$. This defines a continuous and conservative functor $\Mod_{\cD(G)}\to\Mod_{Rep(G)}$, a composition of the continuous and conservative forgetful functor to $\QC(G)$-modules with Gaitsgory's equivalence. The functor admits a left adjoint provided by 
$$\cE\mapsto \cD(G)\otimes_{\QC(G)} \Vect \otimes_{Rep(G)}\cE\simeq \Mod_{\fg}\otimes_{Rep(G)}\cE.$$ Applying the Barr-Beck-Lurie theorem, we identify $\Mod_{\cD(G)}$ with modules for the algebra object $\Mod_{\fg}^{(G)}\simeq \HC_G$ in $Rep(G)$-bimodules, from which the Morita equivalence follows. 

To see that $\HC_G$ is rigid, note that the compact objects in $Rep G$ are automatically left and right dualizable. Observe that push forward along the morphism $\pi:pt/G=G\bs G/G\to \GGG$ induces a compact and monoidal functor
$\pi_*:Rep G\to \HC_G$. (Here compactness follows from $\pi$ being inf-proper.) Thus the images of the compact, dualizable generators of $Rep G$ are compact and dualizable generators of $\HC_G$.

In the filtered version we likewise take weak invariants of a $\cD_\hbar(G)$-module, providing a continuous and conservative functor landing in $\Mod_{Rep(G)}(\Cat_{filt})$. It admits a left adjoint identified with 
$$\cE\mapsto \Mod_{\fg_\hbar}\otimes_{Rep(G)}\cE,$$ and the monad is identified with $\Mod_{\fg_\hbar}^{(G)}\simeq \HC_{\hbar,G}$. Rigidity of $\HC_{\hbar,G}$ follows analogously from the inf-properness of $pt/G \times \A^1/\Gm \to G\backslash G_{Hod}/G$.
\end{proof}

\begin{corollary}[\cite{dario}, Theorem 3.6.1\footnote{The theorem is stated for pro-unipotent groups, but the proof in the finite dimensional case applies to any affine algebraic group.}]
	There is a canonical equivalence
	\[
	S:\cC_G \xrightarrow{\sim} \cC^G
	\]
\end{corollary}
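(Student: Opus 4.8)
The plan is to realize both functors $\cC\mapsto\cC_G$ and $\cC\mapsto\cC^G$ as a pairing of $\cC$ against the single augmentation module $\Vect$, and to reduce the desired equivalence to a self-duality statement for $\Vect$ as a $\cD(G)$-module. By definition the coinvariants are the relative tensor product $\cC_G=\cC\otimes_{\cD(G)}\Vect$, while the invariants are $\cC^G=\Hom_{\cD(G)}(\Vect,\cC)$. If $\Vect$ is dualizable as a $\cD(G)$-module, with (right-module) dual $\Vect^\vee$, then the defining adjunction produces a canonical identification $\Hom_{\cD(G)}(\Vect,\cC)\simeq \Vect^\vee\otimes_{\cD(G)}\cC$, natural in $\cC$. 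Thus the content of the corollary is precisely a canonical self-duality $\Vect^\vee\simeq\Vect$, and the equivalence $S$ will be the induced \emph{norm map} $\cC_G\to\cC^G$; its naturality in $\cC$ is then automatic.

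First I would establish the dualizability of $\Vect$. Since $\cD(G)$ itself is not rigid, I would transport the module $\Vect$ across the Morita equivalence $\Mod_{\cD(G)}\simeq\Mod_{\HC_G}$ of Theorem~\ref{Morita thm}. Under the functor of weak invariants the trivial module $\Vect$ corresponds to $\Vect^{(G)}=\Rep(G)$, viewed as an $\HC_G$-module. Because $\HC_G$ is rigid, its $2$-category of modules is well behaved for duality, and the relevant module is dualizable; transporting back, $\Vect$ is a dualizable $\cD(G)$-module and in particular $\Hom_{\cD(G)}(\Vect,-)$ preserves all colimits.

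Granting dualizability, I would identify the dual by reduction to a generator. Both $(-)_G$ and $(-)^G$ are now continuous and $\Cat$-linear, and $\Mod_{\cD(G)}$ is generated under colimits by the free modules $\cD(G)\otimes W$; by linearity it therefore suffices to produce a natural equivalence for the regular module $\cC=\cD(G)$. Here one computes directly $\cD(G)_G=\cD(G)\otimes_{\cD(G)}\Vect\simeq\Vect$ and $\cD(G)^G=\Hom_{\cD(G)}(\Vect,\cD(G))\simeq\Vect^\vee$, so that on the generator the comparison map $S$ is literally the self-duality datum $\Vect\isom\Vect^\vee$. I would exhibit this datum using the Frobenius (Calabi--Yau) structure of the rigid category $\HC_G$: rigidity supplies evaluation and coevaluation pairings with no cohomological shift, so the augmentation module is self-dual on the nose, in contrast to the \emph{weak} case where $*$- and $!$-pushforward along $G\to pt$ differ by a shift by $\dim G$ and a modular twist.

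The main obstacle is this last point: producing the self-duality $\Vect^\vee\simeq\Vect$ \emph{canonically} and verifying that the induced norm is an equivalence, rather than merely an abstract isomorphism on the generator. This is exactly where rigidity of $\HC_G$ (as opposed to the non-rigidity of $\cD(G)$) does the essential work, since it is rigidity that makes the augmentation module self-dual without a correcting shift and that upgrades the generator-level comparison to an equivalence of colimit-preserving functors on all of $\Mod_{\cD(G)}$. I would take care to track the $S^1$-equivariance and naturality in $\cC$ throughout, since the same self-duality is precisely the input that will be packaged as the Calabi--Yau structure of Theorem~\ref{thm-cy}.
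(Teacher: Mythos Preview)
Your approach is the paper's: transport the augmentation module $\Vect$ across the Morita equivalence of Theorem~\ref{Morita thm} to $\Rep(G)\in\Mod_{\HC_G}$, then use rigidity of $\HC_G$ together with the self-duality of $\Rep(G)$ in $\Cat$ to conclude that $\Rep(G)$ (hence $\Vect$) is self-dual as a module, which immediately gives $\cC_G\simeq\cC^G$.

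Two cleanups. First, your ``reduction to the regular module'' paragraph is circular: after your opening paragraph the entire content of the corollary is already the self-duality $\Vect\simeq\Vect^\vee$, and evaluating at $\cC=\cD(G)$ simply restates this rather than proving it. Second, it is \emph{rigidity} alone (as in \cite{1affine,DGcat}) that furnishes the self-duality of a $\Cat$-dualizable module over a rigid monoidal category; there is no Frobenius or Calabi--Yau structure on $\HC_G$ available at this point---that is precisely the conclusion of Theorem~\ref{thm-cy}, so invoking it here would reverse the logical order of the section. Finally, there is no $S^1$-equivariance to track in this corollary: that structure pertains to the trace and enters only in Proposition~\ref{prop-cyclic} and Lemma~\ref{lemma-trace}.
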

The above equivalence takes an explicit form: it is induced by the functor 
\begin{align*}
\cC \to \cC^G\\
c \mapsto \omega_G \ast c
\end{align*}
where $\omega_G = (G\to pt)^!(\C)$.
\begin{proof}(Sketch)
The corollary is a consequence of the rigidity of $\HC_G$ and the self-duality of $\Rep(G)\in \Cat$. Namely under the Morita equivalence the augmentation $\D(G)$-module $\Vect$ is identified with the $\HC_G$-module $\Rep(G)$, which is self-dual by rigidity of $\HC_G$ (see~\cite{1affine} or ~\cite{DGcat}). 
\end{proof}
Note that the proof has an obvious filtered analog.

Now, let us consider the case when $\cC = \cD(X)$, for some variety $X$ with an action of $G$. The quotient stack $X/G$ is the colimit of a simplicial variety:
\[
(X/G)_\bullet = X \leftleftarrows G\times X \ldots
\]
Let $\cD(X/G)^{\bullet,!}$ (respectively $\cD(X/G)_{\bullet,\ast}$) denote the cosimplicial (respectively, simplicial) object of $\Cat$ with simplices $\cD(X\times G^n)$, obtained by taking the upper shriek (respectively, lower star) functor associated to the structure maps of $(X/G)_\bullet$. 

\begin{lemma}
	The cosimplicial (respectively, simplicial) object $\cD(X/G)^{\bullet , !}$ (respectively, $\cD(X/G)_{\bullet, \ast}$) is canonically identified with Diagram \ref{diagraminvariants} (respectively, Diagram \ref{diagramcoinvariants}) with $\cC = \cD(X)$.
\end{lemma}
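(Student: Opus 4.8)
The plan is to prove both identifications by combining two standard structural facts about $\cD$-modules with careful bookkeeping of the structure maps. The first fact is the external (Künneth) product equivalence $\cD(X \times G^n) \simeq \cD(X) \otimes \cD(G)^{\otimes n}$, which is natural in all variables and compatible with the symmetric monoidal structure and with both $\ast$-pushforward and $!$-pullback (see~\cite[III.4]{GR}); here $\cD(X)$ is regarded as a $\cD(G)$-module via the action, so that the lemma's hypothesis $\cC = \cD(X)$ makes sense. The second fact is the self-duality of $\cD(G)$ (and of $\cD(X)$) as an object of $\Cat$, with evaluation $\cD(G) \otimes \cD(G) = \cD(G \times G) \to \Vect$ given by $\ast$-pushforward to $pt$ of the $!$-restriction to the diagonal; under this self-duality the functor $f_\ast$ is carried to $f^!$ for any map $f$. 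Granting these, I would first match the two diagrams levelwise via Künneth, and then check that the bar structure maps coincide with the geometric ones.

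For the coinvariants (simplicial) statement, the $n$-th term of Diagram~\ref{diagramcoinvariants} with $\cC = \cD(X)$ is $\cD(X) \otimes \cD(G)^{\otimes n}$, which Künneth identifies with $\cD(X \times G^n) = \cD((X/G)_n)$. The face maps of the bar complex are, read from one end to the other, the $\cD(G)$-module action on $\cD(X)$, the convolution products on adjacent $\cD(G)$-factors, and the augmentation $\cD(G) \to \Vect$; the degeneracies insert the monoidal unit $\delta_e$. Since the $\cD(G)$-module structure on $\cD(X)$ is by definition $a_\ast$ for the action map $a \colon G \times X \to X$, convolution is $m_\ast$ for the multiplication $m \colon G \times G \to G$, the augmentation is pushforward along $G \to pt$, and $\delta_e = e_\ast$ for the unit $e \colon pt \to G$, each of these is exactly the $\ast$-pushforward $d_{i,\ast}$ (respectively $s_{i,\ast}$) along the corresponding face (respectively degeneracy) of the simplicial variety $(X/G)_\bullet$. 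Hence $\cD(X/G)_{\bullet,\ast}$ is identified with Diagram~\ref{diagramcoinvariants}.

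For the invariants (cosimplicial) statement, I would apply self-duality to rewrite the $n$-th term $\Hom(\cD(G)^{\otimes n}, \cD(X))$ of Diagram~\ref{diagraminvariants} as $\cD(G)^{\otimes n} \otimes \cD(X) \simeq \cD(X \times G^n)$. Under self-duality the cobar coface maps are precisely the duals of the bar face maps of the previous paragraph, and since $(f_\ast)^\vee = f^!$, they become the $!$-pullbacks $d_i^!$ (and codegeneracies $s_i^!$) along the structure maps of $(X/G)_\bullet$. This exhibits $\cD(X/G)^{\bullet,!}$ as Diagram~\ref{diagraminvariants}. Alternatively one can argue this case directly in parallel with the coinvariants case, replacing every $\ast$-pushforward by the corresponding $!$-pullback.

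The main obstacle is coherence: the identifications above are most easily seen levelwise, but what the lemma asserts is an equivalence of (co)simplicial objects, so the Künneth and duality equivalences must be assembled compatibly with all faces and degeneracies at once, coherently in the $\infty$-categorical sense. This is exactly what the functoriality package of~\cite[III.4]{GR} provides: external products intertwine with $\ast$-pushforward and $!$-pullback and with the symmetric monoidal structure, so the levelwise equivalences promote to a map of (co)simplicial objects. A secondary subtlety is orientation bookkeeping—fixing left-versus-right module conventions and which end of the bar complex carries the augmentation—together with verifying that the self-duality of $\cD(G)$ genuinely intertwines $f_\ast$ with $f^!$ across the whole diagram rather than only on individual terms.
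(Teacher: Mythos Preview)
Your proposal is correct and follows essentially the same approach as the paper: both use the K\"unneth/integral transform equivalences $\cD(G)^{\otimes n}\otimes\cD(X)\simeq\cD(G^n\times X)\simeq\Hom(\cD(G)^{\otimes n},\cD(X))$ to identify the simplices, observe that the simplicial structure maps agree by construction (since the $\cD(G)$-action, convolution, and augmentation are all $\ast$-pushforwards), and deduce the cosimplicial case from self-duality via $(f_\ast)^\vee=f^!$. Your account is in fact more explicit than the paper's, which records the same three ingredients in a single paragraph without spelling out the individual face and degeneracy maps or the coherence discussion.
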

\begin{proof}
The theory of integral transforms for $D$-modules (see~\cite{GR} or~\cite{character}) gives equivalences
\[
\cD(G)^{\otimes n} \otimes \cD(X) \simeq \cD(G^n \times X) \simeq \Hom(\cD(G)^{\otimes n}, \cD(X))
\] (In fact these equivalences only rely on dualizability properties of the relevant categories, and thus extend to the filtered setting.)
This at least identifies the simplices of the diagrams in the Lemma. That the structure maps agree is immediate for the simplicial diagram, and for the cosimplicial diagram, we note that under the self duality of categories of the form $\cD(Y)$ for a smooth variety $Y$, the dual of a functor of the form $f_\ast$ is $f^!$.
\end{proof}

Let $\pi:X = (X/G)_0 \to X/G$ denote the quotient map.
\begin{proposition}\label{(co)inv}
There are equivalences
	\[
	\xymatrix{
	\cD(X)_G = \colim \cD(X/G)_{\bullet, \ast} \ar[r]_-{\pi_\ast}^-{\sim} & \cD(X/G) \ar[r]_-{\pi^!}^-{\sim} & \lim \cD(X/G)^{\bullet,!} = \cD(X)^G
}	
	\]
	\end{proposition}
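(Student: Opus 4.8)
The plan is to read both equivalences as descent and codescent statements for $\cD$-modules along the quotient map $\pi\colon X=(X/G)_0\to X/G$. The first thing I would record is that, since $G$ acts on $X$ so that $\pi$ is a $G$-torsor, $\pi$ is smooth and faithfully flat and its \v{C}ech nerve is exactly the bar simplicial scheme $(X/G)_\bullet$, whose $n$-th term is $G^n\times X$. By the preceding Lemma the cosimplicial object $\cD(X/G)^{\bullet,!}$ and the simplicial object $\cD(X/G)_{\bullet,\ast}$ are identified with Diagrams~\ref{diagraminvariants} and~\ref{diagramcoinvariants} for $\cC=\cD(X)$, so that $\lim\cD(X/G)^{\bullet,!}=\cC^G$ and $\colim\cD(X/G)_{\bullet,\ast}=\cC_G$ by the definition of strong (co)invariants. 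It therefore suffices to produce the two displayed arrows and check each is an equivalence.

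For the right-hand arrow I would invoke smooth descent for $\cD$-modules, i.e.\ the sheaf property of $\cD(-)$ along the smooth surjection $\pi$ (equivalently, the definition of $\cD$-modules on the quotient stack as $IndCoh$ of $(X/G)_{dR}$). Concretely, $\pi^!$ is comonadic: it is conservative because $\pi$ is faithfully flat, and base change along the Cartesian faces of the nerve supplies the Beck--Chevalley condition. This exhibits $\pi^!\colon\cD(X/G)\to\lim\cD(X/G)^{\bullet,!}=\cC^G$ as an equivalence.

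For the left-hand arrow I would not argue codescent directly, but bridge through the preceding Corollary. The pushforward functors $\pi_\ast\colon\cD(G^n\times X)\to\cD(X/G)$ are compatible with the face and degeneracy maps, hence assemble into a functor $\Phi\colon\cC_G=\colim\cD(X/G)_{\bullet,\ast}\to\cD(X/G)$ whose restriction to the zero-simplex is $\pi_\ast\colon\cD(X)\to\cD(X/G)$. I then compute the composite $\pi^!\circ\Phi\colon\cC_G\to\cC^G$: on the zero-simplex it is $\pi^!\pi_\ast$, which by base change for $\cD$-modules along the Cartesian square
\[
\xymatrix{G\times X\ar[r]^-a\ar[d]_-p & X\ar[d]^-\pi\\ X\ar[r]^-\pi & X/G}
\]
(the two projections $X\times_{X/G}X\simeq G\times X\rightrightarrows X$) is the averaging functor $a_\ast p^!\simeq\omega_G\ast(-)$. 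This is precisely the functor $\cC\to\cC^G$ defining the equivalence $S\colon\cC_G\xrightarrow{\sim}\cC^G$ of the Corollary, so $\pi^!\circ\Phi\simeq S$. Since both $S$ and $\pi^!$ are equivalences, so is $\Phi=\pi_\ast$, which is the left-hand arrow.

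The main obstacle is exactly this left-hand ($\ast$-codescent) equivalence: descent along $\pi$ is naturally phrased via the $!$-pullback and yields the limit presentation almost by definition, whereas realizing $\cD(X/G)$ as the colimit of the $\ast$-pushforward diagram with comparison functor \emph{precisely} $\pi_\ast$ is the delicate point, since a priori one only knows $\cC_G\simeq\cC^G$ abstractly. The bridge through the Corollary reduces this to the concrete base-change identity $\pi^!\pi_\ast\simeq\omega_G\ast(-)$, which is where the argument has real content; the one thing to check with care is that this identity is compatible with the full simplicial structure, not merely on the zero-simplex, so that $\pi^!\circ\Phi$ and $S$ agree as functors out of the realization. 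An alternative is to dualize the descent statement using the self-duality of the categories $\cD(G^n\times X)$ (under which $f_\ast$ is dual to $f^!$, as in the Lemma), but that route forces one to track the cohomological shifts by $2\dim G$ arising from $f^!\simeq f^\ast[2\dim G]$ on the smooth structure maps, which the Corollary-based argument avoids and which also needs no smoothness of $X$. Finally, descent, the Corollary, and base change have all been set up to hold verbatim over $\A^1/\Gm$, so the filtered statement for $\cD_\hbar$-modules follows by the identical argument.
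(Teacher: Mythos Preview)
Your argument is correct and follows essentially the same route as the paper: the right-hand equivalence is smooth descent for $\cD$-modules, and the left-hand equivalence is obtained by bridging through the Corollary $S\colon\cC_G\xrightarrow{\sim}\cC^G$ together with the base-change identity $\pi^!\pi_\ast\simeq\omega_G\ast(-)$ to identify the induced map with $\pi_\ast$. Your version is in fact somewhat more explicit (spelling out comonadicity for descent, flagging the simplicial-compatibility check, and noting the filtered extension), but the strategy is identical.
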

\begin{proof}
	The equivalence on the right is the statement of smooth descent for $D$-modules (which extends to the filtered setting). Proposition~\ref{(co)inv} then gives an equivalence $\cD(X)_G \simeq \cD(X/G)$. Noting that the functor $\pi_! \pi_\ast: \cD(X) \to \cD(X)$ is identified with $M \mapsto \omega_G \ast M$ (via base-change) shows that this equivalence is indeed induced by the map $\pi_\ast$, as claimed.
	\end{proof} 
	
	\begin{remark}
		In general, colimits in $\Cat$ are hard to compute. In the case where the diagram $\cC_\bullet$ (of which we want to compute the colimit) involves maps which admit continuous right adjoints, then the colimit of $\cC_\bullet$ may be identified with the limit of $(\cC_\bullet)^R$ (the diagram obtained by taking right adjoints of all the structure maps). For example, this is the case when we consider bar constructions coming from modules for \emph{rigid} monoidal categories, as explained in \cite{1affine} (see also~\cite{character,morita}). Our situation is not of this form: the monoidal category $\cD(G)$ is not rigid (note that the structure maps of of $\cD(X/G)^{\bullet, !}$ are not right adjoint to those of $\cD(X/G)_{\bullet,\ast}$, but rather \emph{left} adjoint---at least, up to a shift). However, the monoidal category $\HC_G$ is rigid, and Theorem~\ref{Morita thm} allows one to compute relative tensor products over $\cD(G)$ (in particular, coinvariants) in terms of those over $HC$, leading to the proposition above.
	\end{remark}

\subsection{Center and Trace of $\cD(G)$}
Now consider the case of $G$ acting on itself by conjugation, with quotient stack denoted $G\adjquot G$. In this case, the diagrams \ref{diagramcoinvariants} and \ref{diagraminvariants} (or equivalently the diagrams $\cD(G\adjquot G)^{\bullet, !}$ and $\cD(G\adjquot G)_{\bullet, \ast}$) are naturally identified with the cyclic bar and cobar constructions computing $\Tr(\cD(G))$ and $\cZ(\cD(G))$ respectively.

Thus, Proposition \ref{(co)inv} gives canonical equivalences 
\begin{equation}\label{eq-centertrace}
\Tr(\cD(G)) \simeq \cD(G\adjquot G) \simeq Z(\cD(G)). 
\end{equation}
This allows us to define the trace map underlying the proposed Calabi-Yau structure on $\cD(G)$:
\begin{align*}
t = p_\ast e^!: \Tr(\cD(G)) \simeq \cD(G\adjquot G) \to \Vect \\
\end{align*}	
where 
\begin{equation}\label{trace diagram}
\xymatrix{
	G\adjquot G & \ar[l]_e pt/G \ar[r]^p & pt }
\end{equation}

\begin{lemma}\label{lemma-nd}
	The composite:
	\[
	\cD(G) \otimes \cD(G) \to \cD(G) \to \cD(G\adjquot G) \to \Vect
	\]
	is the evaluation map for a self duality of $\cD(G)$.
\end{lemma}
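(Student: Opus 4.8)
The plan is to unwind the composite into an explicit integral transform and recognize it as a perfect pairing, paralleling the symmetric Frobenius structure $\tau(\sum_g a_g\, g) = a_e$ on a finite group algebra $\C\Gamma$. Write $\mu: G\times G\to G$ for multiplication and $\pi: G\to G\adjquot G$ for the conjugation quotient, and recall from Proposition~\ref{(co)inv} (with $X=G$ acting by conjugation) together with \eqref{eq-centertrace} that the universal trace map $\cD(G) = \cA\to\Tr(\cA)\simeq\cD(G\adjquot G)$ is precisely $\pi_\ast$. Since the monoidal product is convolution $M\ast N = \mu_\ast(M\boxtimes N)$ and $t = p_\ast e^!$, the composite in question sends $M\boxtimes N$ to $p_\ast e^! \pi_\ast \mu_\ast(M\boxtimes N)$. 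Combining the two pushforwards, this is $p_\ast e^! (\pi\mu)_\ast(M\boxtimes N)$ along the single map $\pi\mu: G\times G\to G\adjquot G$, $(x,y)\mapsto [xy]$.

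First I would apply $\cD$-module base change to the Cartesian square
\[
\xymatrix{
Z \ar[r]^-{\tilde e}\ar[d]_-{\tilde p} & G\times G\ar[d]^-{\pi\mu}\\
pt/G\ar[r]^-{e} & G\adjquot G
}
\]
giving $e^!(\pi\mu)_\ast\simeq\tilde p_\ast\tilde e^!$. The key geometric input is the identification of $Z$. Using the general identity $[Y/G]\times_{[X/G]} X\simeq Y$ for a $G$-equivariant map $Y\to X$ and the atlas $X\to[X/G]$, applied to the conjugation-fixed inclusion $1: pt\to G$, one finds $G\times_{G\adjquot G} pt/G\simeq pt$, mapping to $G$ by the identity element. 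Pulling back further along $\mu$ gives $Z\simeq\mu^{-1}(1) = \{(x,x^{-1})\}\simeq G$, so that $\tilde e: G\hookrightarrow G\times G$ is the graph of inversion $x\mapsto(x,x^{-1})$, while $\tilde p: Z\simeq G\to pt/G$ factors as $G\to pt\to pt/G$ through the basepoint.

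Since $pt\to pt/G\xrightarrow{p} pt$ is the identity, the composite collapses: $p_\ast\tilde p_\ast = (a_G)_\ast$ where $a_G: G\to pt$. Hence the whole composite is
\[
(M,N)\longmapsto (a_G)_\ast\,\tilde e^!(M\boxtimes N),\qquad \tilde e(x) = (x,x^{-1}),
\]
the $\cD$-module incarnation of the finite-group pairing $\langle g,h\rangle = \delta_{gh,e}$. Finally I would identify this as a self-duality evaluation: writing $\iota: G\to G$ for inversion and $\Delta$ for the diagonal, $\tilde e = (\id\times\iota)\circ\Delta$, so $\tilde e^!(M\boxtimes N) = \Delta^!(M\boxtimes\iota^!N)$ and the pairing equals the standard evaluation $(a_G)_\ast\Delta^!(M\boxtimes -)$ of the self-duality of $\cD(G)$ (as $\cD$-modules on the smooth variety $G$, already used above) precomposed with the autoequivalence $\iota^!$ in the second variable. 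Precomposing a self-duality with an equivalence on one factor yields a self-duality, so the composite is the evaluation of a self-duality of $\cD(G)$, as claimed.

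The main obstacle is the fiber-product computation and its stacky bookkeeping: one must track automorphisms carefully to see that $Z$ is the honest scheme $\{xy=1\}$ rather than something carrying residual $G$-gerbe structure, and that $\tilde p$ genuinely factors through a point, so that the cancellation $pt\to pt/G\to pt=\id$ removes all equivariance and produces the plain de Rham pushforward $(a_G)_\ast$. One should also confirm that the base-change isomorphism applies to these maps in the Gaitsgory--Rozenblyum $(f_\ast,f^!)$ formalism (it does, as $\cD$-module base change holds for arbitrary Cartesian squares). The $S^1$-equivariance of $t$ needed for the full Calabi-Yau structure is a separate matter, to be addressed once the underlying nondegenerate pairing above is pinned down.
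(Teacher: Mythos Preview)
Your proof is correct and follows essentially the same route as the paper: both reduce the composite via base change to the integral transform $(a_G)_\ast\,\tilde e^{\,!}$ along the antidiagonal $\tilde e(x)=(x,x^{-1})$ (the paper's $\pi_{G\ast}\nabla^!$ with $\nabla(g)=(g^{-1},g)$). The only differences are cosmetic: you spell out the fiber-product computation that the paper compresses into the phrase ``by base change,'' and you certify nondegeneracy by recognizing the pairing as the standard Verdier self-duality twisted by the inversion autoequivalence, whereas the paper instead names the coevaluation $\pi_G^!\nabla_\ast$ and appeals to base change for the Zorro identities.
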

\begin{proof}
	By base change, the composite functor is given by $\pi_{G\ast} \nabla^!$, where $\nabla:G\to G\times G$,  $\nabla(g)= (g^{-1},g)$. The corresponding coevaluation map is given by $\pi_G^! \nabla_\ast$, and the standard duality identities (``Zorro conditions") follow from base change. 
\end{proof}
\begin{remark}
The self duality of $\cD(G)$ induced by the trace map differs from the usual self duality of $\cD(G)$ (coming from Verdier duality) by the automorphism $\sigma_\ast$, where $\sigma(g)=g^{-1}$.
\end{remark}

\subsection{Cyclic structure on class $D$-modules}
To show that the trace map $t$ defines a Calabi-Yau structure on $\cD(G)$ it remains to show the trace map defined above carries an $S^1$-equivariant structure. This requires a geometric reinterpretation of the above constructions, which we now discuss.
Note that $G\adjquot G = \cL(BG) = \Map(S^1,BG)$ carries a natural $S^1$-action, giving rise to an action of $S^1$ on $\cD(G\adjquot G)$. 
\begin{proposition}\label{prop-cyclic}
The equivalence of Equation~\ref{eq-centertrace} 
\[
\Tr(\cD(G)) \xrightarrow{\sim} \cD(G\adjquot G).
\]
carries a canonical $S^1$-equivariant structure.
\end{proposition}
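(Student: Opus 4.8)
The plan is to reduce the claim to an equivalence of \emph{cyclic} objects in $\Cat$ and then conclude formally, since the colimit (realization) of a cyclic object carries a canonical $S^1$-action functorially in the object. Recall from Section~\ref{cyclic section} that the $S^1$-action on $\Tr(\cD(G)) = \int_{S^1}\cD(G)$ arises from the cyclic object $\cD(G)^{\otimes -}\colon I_S \to \Cat$ (with $I_S\simeq\Lambda^{op}$), whose colimit is the trace and whose underlying simplicial object is the cyclic bar complex. On the geometric side, the loop-rotation action on $G\adjquot G = \cL BG = \Map(S^1,BG)$ is itself produced by a cyclic object: the cyclic bar construction $B^{\mathrm{cyc}}_\bullet G$ of the group $G$ --- the cyclic simplicial stack $[n]\mapsto G^{n+1}$ with faces given by multiplication (with a wrap-around in the last face), degeneracies by the unit, and cyclic operator $\tau_n$ the rotation of factors --- whose realization is $\cL BG$ equipped with the rotation $S^1$-action. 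Applying lower-star pushforward levelwise yields a cyclic object $[n]\mapsto \cD(G^{n+1})$ in $\Cat$ whose colimit, by descent exactly as in Proposition~\ref{(co)inv}, is $\cD(G\adjquot G)$ with the induced rotation action.

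First I would record that the two cyclic objects share the same underlying simplicial object. This is precisely the content of the lemma preceding Proposition~\ref{(co)inv}, together with the identification (made just above) of the conjugation bar complex with the cyclic bar complex of $\cD(G)$: the K\"unneth (integral transform) equivalences $\cD(G)^{\otimes(n+1)}\simeq \cD(G^{n+1})$ match the two levelwise and intertwine all faces and degeneracies, since on both sides these are the pushforwards of the multiplication $G\times G\to G$ and of the unit $\mathrm{pt}\to G$.

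The crux is to promote this to an equivalence of $I_S$-diagrams, i.e.\ to match the cyclic rotation operators $\tau_n$. On the algebraic side $\tau_n$ acts on $\cD(G)^{\otimes(n+1)}$ by the cyclic permutation of tensor factors built into the cyclic bar structure of the $E_1$-algebra $\cD(G)$; on the geometric side it acts on $\cD(G^{n+1})$ by pushforward along the cyclic permutation of the factors of $G^{n+1}$ defining $B^{\mathrm{cyc}}_\bullet G$. Under the symmetric-monoidal K\"unneth equivalence these two permutations agree, so the levelwise equivalences assemble to an equivalence in $\Fun(I_S,\Cat)$. Passing to colimits, which carries equivalences of cyclic objects to $S^1$-equivariant equivalences of realizations, produces the desired $S^1$-equivariant identification $\Tr(\cD(G))\simeq \cD(G\adjquot G)$.

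The main obstacle is the coherence of the last step: one must exhibit the equivalence not merely levelwise and on the underlying simplicial object (which Proposition~\ref{(co)inv} already provides) but as a single map of functors out of $I_S\simeq\Lambda^{op}$, compatible with all cyclic morphisms simultaneously. The cleanest way to secure this coherence is to view the entire identification as an instance of the functoriality of factorization homology in the source $1$-manifold: the equivalence $\int_{S^1}\cD(\Omega BG)\simeq \cD(\Map(S^1,BG))$ is natural in $S^1$ and hence carries the action of $\mathrm{Diff}^+(S^1)\simeq S^1$ that produces and matches both circle actions at once \cite{l-ha}. The one genuinely technical verification is that $\cD(-)_\ast$ is suitably lax symmetric monoidal and commutes with the $S^1$-equivariant realization $|B^{\mathrm{cyc}}_\bullet G| = \cL BG$, so that the colimit of $[n]\mapsto\cD(G^{n+1})$ computes $\cD(\cL BG)$ equivariantly; this follows from descent for $\cD$-modules together with the preservation of the $I_S$-structure by a symmetric monoidal pushforward.
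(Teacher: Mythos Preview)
Your approach is essentially the paper's: both arguments exhibit the equivalence as arising from an identification of $I_S$-diagrams (cyclic objects) $\cD(G)^{\otimes -}\simeq \cD(G^{\pi_0(-)})$, and deduce $S^1$-equivariance from naturality in the framed circle $S$. The paper packages this via the moduli spaces $\cM_G(S,I)$ of local systems on $S$ trivialized away from $I$, which makes the $I_S$-structure and its dependence on $S$ tautological and sidesteps the explicit $\tau_n$-matching you flag as the coherence obstacle; your fallback to factorization-homology functoriality in $S$ is the same move in different language.
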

\begin{proof}
To prove the proposition, we will exhibit the augmentation $\cD(G\adjquot G) \to \cD(G\adjquot G)_{\bullet, \ast}$ as the fiber of a family of such morphisms over the moduli space of framed circles $BS^1$. 

Recalling the notation of Section~\ref{cyclic section}, given a framed circle $S$ and an object $I$ of $I_S$, we associate the moduli space $\cM_G(S,I)$ of $G$-local systems on $S$, trivialized on the complement of the intervals. Let $\cM(S)$ denote the moduli of $G$-local systems on $S$. There are forgetful maps $\cM_G(S,I) \to \cM_G(S)$ for each $I$, compatible with the structure maps of $I_S$. Note that $\cM_G(S,I) \simeq G^{\pi_0(I)}$, and $\cM_G(S)) \simeq G\adjquot G$. The underlying simplicial set of the $I_S$-diagram $\cM_G(S,I)$ is equivalent to $(G\adjquot G)_\bullet$, and the forgetful map $\cM_G(S,I) \to \cM_G(S)$ induces the augmentation map $(G\adjquot G)_\bullet \to G\adjquot G$. Furthermore, the $I_S$-diagram of categories $\cD(\cM_G(S,I))$ (taking the lower star functor associated to the structure maps) is naturally identified with the cyclic bar construction of the category $\cD(G)$ as described e.g. in~\cite{character}.

In particular, the morphism
\[
\colim_{I_S} \cD(\cM_G(S,I)) \to \cD(\cM(S))
\]
carries a natural $S^1$-equivariant structure. The left hand side is $\Tr(\cD(G))$, the right hand side is $\cD(G\adjquot G)$, and the morphism is precisely the one constructed in Proposition \ref{(co)inv}. This defines the required $S^1$-equivariant structure.
\end{proof}

We also note:
\begin{lemma}\label{lemma-trace}
	The functor $t = p_\ast e^!$ carries an $S^1$-equivariant structure.
\end{lemma}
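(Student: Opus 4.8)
The plan is to realize $t = p_\ast e^!$ as the image, under the $\cD$-module formalism, of an $S^1$-equivariant correspondence, so that the required $S^1$-equivariant structure is produced automatically. Recall that the $S^1$-action on the source $\cD(G\adjquot G)$ arises by applying $\cD(-)$ to the loop-rotation action on the stack $G\adjquot G = \cL(BG) = \Map(S^1, BG)$, while the target $\Vect = \cD(pt)$ carries the trivial $S^1$-action. Thus to equip $t$ with an $S^1$-equivariant structure it suffices to lift the defining correspondence of Diagram~\ref{trace diagram},
\[
G\adjquot G \xleftarrow{e} pt/G \xrightarrow{p} pt,
\]
to a correspondence of $S^1$-spaces, giving both $pt/G$ and $pt$ the trivial $S^1$-action.

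The geometric heart of the argument is the identification of $e\colon pt/G \to G\adjquot G$ with the inclusion of constant loops, i.e.\ with the map $\Map(r, BG)$ induced by the projection $r\colon S^1 \to pt$. Under the monodromy description of $G\adjquot G$, the image of $e$ is exactly the trivial-monodromy locus, which is precisely the image of the constant maps $\Map(pt, BG) \to \Map(S^1, BG)$. Since $r$ is $S^1$-equivariant — the composite of $r$ with any rotation of $S^1$ again equals $r$ — the induced map $e = \Map(r, BG)$ is $S^1$-equivariant for the rotation action on its target and the trivial action on its source; equivalently, $e$ factors through the $S^1$-fixed locus of $\cL(BG)$. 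The map $p\colon pt/G \to pt$ is tautologically $S^1$-equivariant for the trivial actions. Hence Diagram~\ref{trace diagram} upgrades to a diagram of $S^1$-spaces, and consequently $e^!$ (source with rotation action, target trivial) and $p_\ast$ (both trivial) are each $S^1$-equivariant, so their composite $t = p_\ast e^!$ is $S^1$-equivariant as desired.

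To make the last step precise I would invoke the functoriality of the $\cD$-module formalism of~\cite{GR}: the assignment $X \mapsto \cD(X)$ together with the $\ast$-pushforward and $!$-pullback functorialities may be run in families over $BS^1$, so that a $BS^1$-family of correspondences (equivalently, an $S^1$-equivariant correspondence) yields a $BS^1$-family of functors, i.e.\ an $S^1$-equivariant functor. The main point to verify carefully is exactly this family version — that the integral-transform construction $p_\ast e^!$ assembles over $BS^1$ compatibly with the $S^1$-actions and with the base-change identities — which is the parametrized analog of the machinery already used in Lemma~\ref{lemma-nd} and Proposition~\ref{prop-cyclic}. This is where the compatibility of the $S^1$-action with the $!$ and $\ast$ functorialities is genuinely checked, and is the only nontrivial obstacle; the equivariance of the individual maps $e$ and $p$ is elementary once the constant-loop description is in hand.
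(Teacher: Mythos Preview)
Your proposal is correct and follows essentially the same route as the paper: both arguments observe that the correspondence $G\adjquot G \xleftarrow{e} pt/G \xrightarrow{p} pt$ is obtained by applying $\Map(-,BG)$ (equivalently $\cM_G(-)$) to the $S^1$-equivariant maps $S^1 \to pt$ and $pt \to \emptyset$, and then pass to $\cD$-modules in families over $BS^1$. The paper is terser—it simply records the diagram $\cM_G(S) \leftarrow \cM_G(pt) \rightarrow \cM_G(\emptyset)$ and declares the equivariance to follow—while you spell out more explicitly the identification of $e$ with the constant-loop inclusion and the need to run the integral-transform formalism parametrically, but the underlying idea is identical.
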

\begin{proof}
	As in Proposition~\ref{prop-cyclic}, the idea is to exhibit the diagram~\ref{trace diagram} as the fiber of a family of such diagrams over the moduli of framed circles. For such a circle $S$, consider the map $S\to pt$, which is naturally equivariant for the action of $S^1$ by rotations.\footnote{It is natural from the perspective of TFT to think of $pt$ as a disc, with $S$ being included as its boundary.} Thus, the diagram~\ref{trace diagram} is equivalent to:
	\[
	\cM_G(S) \leftarrow \cM_G(pt) \rightarrow \cM_G(\emptyset).
	\]
	from which the required $S^1$-equivariant structure follows.
\end{proof} 

\begin{proof}[Proof of Theorem \ref{thm-cy}]
	We have shown that $\cD(G)$ carries an $S^1$-equivariant trace map,$t$, (by Lemma \ref{lemma-trace} and Proposition \ref{prop-cyclic}), which is non-degenerate (by \ref{lemma-nd}), as required.
\end{proof}

\subsubsection{Monoidal structure on class $D$-modules}
We also note that the category $\cD(G\adjquot G)=\cD((pt/G)^{S^1})$ carries an $E_2$-monoidal structure arising from the pair of pants multiplication. The Drinfeld center $Z(\cA)$ of a monoidal category $\cA$ also carries such a structure. It is true that the functor 
\[
\pi^!:\cD(G\adjquot G) \xrightarrow{\sim} Z(\cD(G))
\]
defined above carries an $E_2$-monoidal structure. However, for the purposes of this paper it will be enough to note that it is just monoidal. In fact, we only need that it is naively monoidal, meaning that there exists an isomorphism
\[
\pi^!(M\ast N) \simeq \pi^!(M) \ast \pi^!(N)
\]
This follows from the observation that the comonadic forgetful functors from $\cD(G\adjquot G)$ and $Z(\cD(G))$ to $\cD(G)$ are both monoidal by construction.

\section{The character field theory}
In this section we will study the topological field theory $\cX_G$ defined, via the cobordism hypothesis, from the 2-dualizable Calabi-Yau category $\HC_G$. We also consider the classical version $\cX_{0,G}$ associated to classical Harish-Chandra bimodules $\HC_{0,G}$ and the filtered version.
Let $\Bord_2^{or}$ denote the $(\infty,2)$-category of oriented bordisms of 2-manifolds. 

\begin{definition} The {\em character TFT} $\cX_G$ is the symmetric monoidal functor $$\cX_G:\Bord_2\longrightarrow \Alg_{(1)}(\Cat)$$
defined by applying the Cobordism Hypothesis to the $SO(2)$-invariant 2-dualizable object $\HC_G$.
\end{definition}

 In particular we have for a closed oriented 1-manifold $$\cX_G(M)\simeq \cD(\Loc_G(M)).$$
 
\subsection{The character sheaf theory}
Let $\Bord_{1,2}^{or}$ denote the $(\infty,1)$-category of oriented bordisms of surfaces. We would like to give a geometric description of the functor
$$\cX_G^+:\Bord_{1,2}^{or}\longrightarrow \Cat$$ given by restricting the field theory $\cX_G$ to closed 1-manifolds.

Let us introduce another (1,2)-dimensional topological field theory, namely the functor given by the composition
$$\xymatrix{\cX_G^{+,geom}:\Bord_{1,2}^{or}\ar[r]^-{\Loc_G} & Corr \ar[r]^-{\cD} &\Cat}$$
of the functor from the bordism category to the correspondence category of stacks given by passing to stacks of local systems, i.e., $$\Loc_G(-)= [-, BG],$$ and the functor on the correspondence category given by $\cD$-modules \cite[III.4]{GR}. 

Let us spell out this functor. Let $S_i:\partial_{in} S_i\to \partial_{out} S_i$ be oriented surfaces with boundary considered as cobordisms. Given an identification $\partial_{out}S_1\simeq \partial_{in} S_2$, let $S$ denote the sewed surface $$S=S_1\coprod_{\partial_{out}S_1\simeq \partial_{in} S_2} S_2.$$ Then moduli of local systems on the surfaces compose as the composition of spans, i.e., as fiber products of stacks:

$$\xymatrix{&&\ar[dl] \Loc_G(S) \ar[dr]&& \\
& \ar[dl]\Loc_G(S_1)\ar[dr]&&\ar[dl]\Loc_G(S_2)\ar[dr]&\\
\Loc_G(\partial_{in}S_1) &&\Loc_G(\partial_{out}S_1\simeq \partial_{in} S_2) &&\Loc_G(\partial_{out}S_2)}$$

The functor $\cD$ is constructed in Section III.4 of~\cite{GR}. In fact they construct a symmetric monoidal category out of an $(\infty,2)$-category of prestacks locally almost of finite type, with morphisms given by correspondences which are ind-nil-schematic and 2-morphisms given by proper maps of correspondences. We will only need the restriction of this functor to the $(\infty,1)$-category in which we allow only isomorphisms of correspondences, and where the objects are geometric stacks of finite type (in fact smooth ones). 
To a stack $X$ the functor attaches the dg category $\cD(X)$ of $\cD$-modules on $X$, while to a correspondence 
$$\xymatrix{X & \ar[l]_-q Z \ar[r]^-p & Y}$$
it attaches the functor $$p_*q^!:\cD(X)\longrightarrow \cD(Y).$$ In particular for $X=Y=pt$ we have $$Z\mapsto p_*q^!\C=p_*\omega_Z=C_\ast^{BM}(Z)\in \Vect,$$ the cohomology of the dualizing sheaf, or Borel-Moore chains, of $Z$.

Thus we see that for a closed oriented 1-manifold the functor $\cX_G^{+,geom}$ attaches the category 
 $$\cX_G^{+,geom}(M)\simeq \cD(\Loc_G(M)) $$ -- concretely we have $$S^1\mapsto \cD(G\adjquot G).$$ Moreover for a closed surface $S$ we 
have  $$\cX_G^{+,geom}(S)\simeq C_\ast^{BM}(\Loc_G(S)),$$ the Borel-Moore chains of the character stack.

\subsection{Geometric identification of the character theory}

We will not give the full structure of a natural equivalence of the two functors $\cX_G^+$ and $\cX_G^{+,geom}$, but content ourselves with the existence of an unstructured identification of their values on any surface with boundary.

\begin{theorem}\label{geometric character}
For any oriented surface with boundary $S,\partial S= \partial_{in} S\coprod \partial_{out} S$ we have an equivalence
$$\xymatrix{\cX_G^{+}(S)\simeq \cX_G^{+,geom}:  \cD(\Loc_G(\partial_{in} S))\ar[r]& \cD(\Loc_G(\partial_{out} S))}.$$
\end{theorem}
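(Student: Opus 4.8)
The plan is to regard both $\cX_G^{+}$ and $\cX_G^{+,geom}$ as symmetric monoidal functors $\Bord_{1,2}^{or}\to\Cat$ and to compare them by cutting an arbitrary surface into elementary pieces. Every object of $\Bord_{1,2}^{or}$ is a disjoint union of circles, and both functors send $S^1$ to $\cD(G\adjquot G)$: for $\cX_G^{+,geom}$ this is the definition $\cD(\Loc_G(S^1))$, while for $\cX_G^{+}$ it is the Morita invariance $\Tr(\HC_G)\simeq\Tr(\cD(G))$ together with the identification $\Tr(\cD(G))\simeq\cD(G\adjquot G)$ of Proposition~\ref{prop-cyclic}. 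I fix these identifications on objects once and for all. Since any oriented surface with boundary is a composite of the elementary cobordisms---cup, cap, pair of pants and copair of pants---and both functors take gluing of surfaces to composition (on the geometric side this is computed by fibre products of the stacks $\Loc_G(-)$, with base change identifying the composite integral transform), it suffices to identify the two functors on each of these generators, compatibly with the fixed identifications on circles.

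First I would match the two elementary non-invertible pieces supported on $pt/G$. Viewing the cap as the cobordism $S^1\to\emptyset$, the functor $\cX_G^{+,geom}$ assigns the integral transform $p_\ast e^!$ along $G\adjquot G\xleftarrow{e}pt/G\xrightarrow{p}pt$, which is literally the trace $t$ underlying the Calabi--Yau structure; thus $\cX_G^{+}(\mathrm{cap})=t=p_\ast e^!$ as well, by Lemma~\ref{lemma-trace} and diagram~\ref{trace diagram}. Dually, the cup $\emptyset\to S^1$ is sent by $\cX_G^{+,geom}$ to $e_\ast p^!$, the $\delta$-type class $\cD$-module at the identity, which is precisely the unit of the convolution algebra on $\cD(G\adjquot G)$; it therefore agrees with the TFT unit as soon as the multiplicative structure is matched.

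The essential point is the multiplication, i.e. the pair of pants $S^1\sqcup S^1\to S^1$. On the geometric side $\cX_G^{+,geom}$ assigns the integral transform along the pants correspondence $(G\adjquot G)^{2}\leftarrow\Loc_G(P)\to G\adjquot G$, which by base change is the pair-of-pants convolution product on class $\cD$-modules $\cD(G\adjquot G)=\cD((pt/G)^{S^1})$. On the TFT side $\cX_G^{+}$ assigns the multiplication on $\Tr(\cD(G))$ induced by the algebra structure of $\cD(G)$, which under Equation~\ref{eq-centertrace} is the convolution product on $Z(\cD(G))$. That these two products coincide is exactly the naive monoidality of $\pi^!\colon\cD(G\adjquot G)\xrightarrow{\sim}Z(\cD(G))$ recorded above: both sides forget monoidally to $\cD(G)$ via the comonadic forgetful functors, and a convolution product is pinned down by its image under this conservative functor. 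The copair of pants, being the comultiplication, is then determined by the multiplication together with the nondegeneracy of $t$ (Lemma~\ref{lemma-nd}), so it requires no separate verification.

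I expect the main obstacle to be precisely this matching of the multiplicative structures: one must check that the algebra structure carried by the trace $\Tr(\cD(G))$, transported along Proposition~\ref{prop-cyclic}, genuinely is the geometric pants product and not merely an abstractly isomorphic monoidal structure. Because we only assert an unstructured identification of values, it is enough to verify the comparison on the four generators compatibly with the fixed identification on $S^1$; granting this, functoriality of both theories under gluing forces $\cX_G^{+}(S)\simeq\cX_G^{+,geom}(S)$ for every oriented surface with boundary $S$, which is the claimed equivalence of functors $\cD(\Loc_G(\partial_{in}S))\to\cD(\Loc_G(\partial_{out}S))$.
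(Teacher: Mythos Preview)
Your proposal is correct and follows essentially the same approach as the paper's proof: fix the identification on $S^1$, match the elementary bordisms (discs and pair of pants) using the Calabi--Yau trace $t=p_\ast e^!$ and the naive monoidality of $\pi^!$, and then deduce the copair of pants from the self-duality induced by the nondegenerate pairing, so that an arbitrary surface is handled by decomposing into these generators. The paper packages the final reduction step as a separate Proposition~\ref{basic blocks}, but your inline argument is the same in substance.
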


\begin{proof}
First consider $S=D^2$ as a bordism from $S^1$ to the empty set. In $\cX_G^+$ this bordism defines the trace on $\Tr(\HC_G)\simeq \cD(G/G)$ prescribing the Calabi-Yau structure, and as such we've identified it with the functor given by the correspondence $$\xymatrix{\Loc_G(S^1)& \ar[l] \Loc_G(pt) \ar[r]& pt},$$ which agrees with $\cX_G^{+,geom}$. 

Next consider the bordism $D^2$ from the empty set to $S^1$ and a fixed pair of pants bordism from $S^1 \coprod S^1$ to $S^1$. In $\cX_G^+$ these bordisms are part of the $E_2$ monoidal structure on the center $\cZ(\HC_G)\simeq \cX_G^+(S^1)$, namely the unit and a binary product operation. We have identified $\cZ(\HC_G)$ with $\cD(G/G)=\cD(\Loc_G(S^1))$. We also checked the compatibility of this identification with with the unit and the binary multiplication map coming from convolution on $G$, i.e. of $G$-local systems on marked circles, hence with the $\cD$-module functors applied to bordisms from unions of $S^1$ to itself, in other words with the morphisms defining the corresponding operations in $\cX_G^{+,geom}$.

The theorem now follows from these basic building blocks using Proposition~\ref{basic blocks}.
\end{proof}

\begin{proposition}\label{basic blocks}
Let $\cZ,\cW$ denote two symmetric monoidal functors $$\cZ,\cW:Bord_{1,2}\longrightarrow \cC,$$ and assume given equivalences
$$\iota_{S^1}:\cZ(S^1)\simeq \cW(S^1)$$ as well as equivalences 
$$\iota_{M}:\cZ(M)\simeq \cW(M)$$ over $\iota_{S^1}$ for $M$ an incoming and outgoing disc and a pair of pants. 
Then for any oriented surface with boundary $S$ there exists an equivalence $\cZ(S)\simeq \cW(S)$.
\end{proposition}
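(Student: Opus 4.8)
The plan is to reduce the statement to the standard generators-and-relations description of the oriented $2$-dimensional bordism category and then glue the given local equivalences along a decomposition of $S$.

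First I would recall the classification of surfaces as cobordisms: every oriented surface with boundary $S$, regarded as a bordism $\partial_{in}S\to\partial_{out}S$, can be written as a composite of disjoint unions of the four elementary cobordisms --- the incoming disc $\CUP$, the outgoing disc $\CAP$, the pair of pants $\PANTS$, and the copair of pants $\COPANTS$ --- together with identity cylinders. This is the Morse-theoretic handle decomposition underlying the fact that $\Bord_{1,2}^{or}$ is generated, as a symmetric monoidal category, by $S^1$ and these four morphisms (the classification yielding the commutative-Frobenius-algebra description of $2$d TFTs; see e.g. Kock's monograph). It therefore suffices to produce an equivalence $\iota_M:\cZ(M)\simeq\cW(M)$ lying over $\iota_{S^1}$ for each of the four generators, and then to check that these assemble along such a decomposition.

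We are given $\iota_M$ for $M\in\{\CUP,\CAP,\PANTS\}$; the one missing case is the copair of pants $\COPANTS$. To obtain it I would use that the images of the generating cobordisms endow $\cZ(S^1)$ and $\cW(S^1)$ with the structure of commutative Frobenius algebra objects of $\cC$, with multiplication $\cZ(\PANTS)$, unit $\cZ(\CUP)$, counit $\cZ(\CAP)$, and comultiplication $\cZ(\COPANTS)$ (and likewise for $\cW$). The hypotheses say precisely that $\iota_{S^1}$ intertwines the multiplications, the units, and the counits. Now the Frobenius pairing $\cZ(\CAP)\circ\cZ(\PANTS):\cZ(S^1)\otimes\cZ(S^1)\to\mathbf 1$ exhibits a nondegenerate self-duality of $\cZ(S^1)$, and the comultiplication of a Frobenius algebra is uniquely determined by its multiplication and counit: the copairing is the coevaluation dual to this pairing, unique up to a contractible space of choices, and $\cZ(\COPANTS)$ is recovered from it and $\cZ(\PANTS)$. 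Since $\iota_{S^1}$ matches the pairings and $\cC$ is symmetric monoidal (so carries the duality), it automatically intertwines the comultiplications. This supplies the remaining $\iota_{\COPANTS}$ lying over $\iota_{S^1}$.

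Finally I would assemble the equivalence on an arbitrary $S$. Fix a decomposition $S=E_r\circ\cdots\circ E_1$ into elementary pieces as above; take for each $E_i$ the corresponding equivalence ($\iota_{S^1}$ and its disjoint-union powers on the cylinders, and $\iota_M$ on the generators) and compose them. Because each $\iota_M$ lies over $\iota_{S^1}$ on its boundary circles, the equivalences of consecutive pieces agree on the intermediate $1$-manifold, so their composite is a well-defined equivalence $\cZ(S)\simeq\cW(S)$ lying over $\iota_{S^1}$ on $\partial S$. The main obstacle is exactly this gluing step: one must ensure the locally defined equivalences are mutually compatible at the cutting circles so that the composite is coherent, and it is precisely the hypothesis that every $\iota_M$ is taken over $\iota_{S^1}$ that guarantees this compatibility. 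Since we only claim existence of an unstructured equivalence, independence of the chosen decomposition need not be verified.
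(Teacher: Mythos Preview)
Your proposal is correct and follows essentially the same route as the paper: both arguments use the self-duality of $\cZ(S^1)$ induced by the Frobenius pairing (outgoing disc composed with pair of pants) to recover the missing copair of pants from the given data, and then conclude by choosing a handle decomposition of $S$ into the four elementary bordisms and composing the identified pieces. The paper phrases the recovery step in terms of matching the evaluation and hence the coevaluation (incoming elbow), from which the incoming pair of pants follows, while you phrase it as the Frobenius comultiplication being determined by the multiplication and counit; these are the same observation.
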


\begin{proof}
Consider the pairing $\cZ(S^1)^{\otimes 2}\to 1_\cC$  given by the outgoing elbow, i.e., composition of the pair of pants and the outgoing disc. This pairing is the evaluation for a self-duality of $\cZ(S^1)$, with coevaluation given by the incoming elbow. Since the given data $\iota$ contain an identification of $\cZ$ and $\cW$ on the circle, pair of pants and outgoing discs, hence of the evaluation pairings, it follows that $\iota$ induces an identification of $\cZ(S^1)$ and $\cW(S^1)$ as self-dual objects. In particular we obtain an identification of $\cZ$ and $\cW$ on the incoming pair of pants.

Given a general surface with boundary $S,\partial S$ we first choose an identification of $\partial S$ with a disjoint union of standard circles, and thus an identification of $\cZ(\partial S)$ with $\cW(\partial S)$.
Next choose a decomposition of $S$ as a composition of morphisms in $Bord_{1,2}$ each of which is identified with an incoming or outgoing pair of pants or an incoming or outgoing disc (and respecting the given identification of $\partial S$). We have therefore expressed $\cZ(S)$ and $\cW(S)$ as compositions of morphisms between equivalent objects of $\cC$, which have also been identified. The proposition follows.
\end{proof}

\subsection{Filtered version}
The proof of Theorem~\ref{geometric character} extends directly to the filtered setting, given the functoriality of filtered $\cD$-module outlined in Section~\ref{filtered section}. In other words, we can relate the values of the restriction 
$$\cX_{\hbar,G}^+:\Bord_{1,2}^{or}\longrightarrow \Cat_{filt}$$ on morphisms with the values of the composition
$$\xymatrix{\cX_{\hbar,G}^{+,geom}:\Bord_{1,2}^{or}\ar[r]^-{\Loc_G} & Corr \ar[r]^-{\cD_{\hbar}} &\Cat_{filt}}.$$
The latter functor attaches to a stack $X$ the filtered category of sheaves on the Hodge stack $\cD_{\hbar}(X)=IndCoh(X_{Hod})$, 
and to a correspondence 
$$\xymatrix{X & \ar[l]_-q Z \ar[r]^-p & Y}$$
it attaches the functor $$p_*q^!:\cD_\hbar(X)\longrightarrow \cD_\hbar(Y).$$ In particular for $X=Y=pt$ we attach to $Z$ the filtered complex $\Gamma(Z_{Hod},\omega)\in \QC(\A^1/\Gm)$, the cohomology of the canonical sheaf on the deformation to the normal cone of the de Rham space of $Z$ - in other words (for $Z$ classical) with the naive Hodge filtration on de Rham cohomology defined by~\cite{SiT}. Note that in general we can identify this with the global sections of the Hodge filtration on $\omega_{Z_{dR}}$ itself defined in~\cite[IV.5.5]{GR}.

At $\hbar=0$, we can calculate the topological field theory $\cX_{0,G}$ directly as the ``commutative factorization homology" (tensoring of commutative algebras by simplicial sets) of the symmetric monoidal category $\HC_{0,G}=\QC(\fgx/G)$ of sheaves on the perfect stack $\fgx/G$. Hence it is easily evaluated (as in~\cite{BFN}) in terms of the mapping stacks $$[S,\fgx/G]=\{E\in \Loc_G(S), \; \eta\in \Gamma(ad(E)^*)\},$$
parameterizing local systems with a coadjoint section. Since deformations of local systems are given by cohomology of the adjoint representation, we find an identification (for $S$ a smooth compact $n$-manifold) with the shifted cotangent stack
$$[S,\fgx/G]\simeq T^*[1-n]\Loc_G(S).$$ In particular on $S^1$ we find $$\cX_{G,0}(S^1)\simeq \QC(T^*G\adjquot G)$$
and on a closed oriented surface we find $$\cX_{G,0}(S)\simeq \Gamma(T^*[-1]\Loc_G(S),\cO),$$ i.e. cohomology of polyvector fields (in homological degrees) on the character stack. This is precisely the global cohomology of the associated graded of the Hodge-filtration on the dualizing complex calculated by Gaitsgory-Rozenblyum~\cite[Proposition IV.5.5.3.2]{GR}.
For $G$ reductive we can use an invariant form on $\fg$ and the resulting symplectic form on the character stack to identify this also with functions on the shifted tangent bundle $T[-1]\Loc_G(S)$, i.e., differential forms.


\end{document}